\def\moverlay{\mathpalette\mov@rlay}
\def\mov@rlay#1#2{\leavevmode\vtop{%
   \baselineskip\z@skip \lineskiplimit-\maxdimen
   \ialign{\hfil$\m@th#1##$\hfil\cr#2\crcr}}}
\newcommand{\charfusion}[3][\mathord]{
    #1{\ifx#1\mathop\vphantom{#2}\fi
        \mathpalette\mov@rlay{#2\cr#3}
      }
    \ifx#1\mathop\expandafter\displaylimits\fi}
\newcommand{\cupdot}{\charfusion[\mathbin]{\cup}{\cdot}}
\newcommand{\bigcupdot}{\charfusion[\mathop]{\bigcup}{\cdot}}
\pgfplotsset{compat=1.15}
\newtheorem{theorem}{Theorem}[section]
\newtheorem{thmx}{Theorem}
\newtheorem{example}[theorem]{Example}
\newtheorem{remark}[theorem]{Remark}
\newtheorem{lemma}[theorem]{Lemma}
\newtheorem{proposition}[theorem]{Proposition}
\DeclarePairedDelimiter\ceil{\lceil}{\rceil}
\DeclarePairedDelimiter\floor{\lfloor}{\rfloor}
\def\la{\lambda}
\def\N{\mathbb N}
\def\R{\mathbb R}
\def\Z{\mathbb Z}
\def\cP{\mathcal P}
\def\cX{\mathcal X}
\def\cY{\mathcal Y}
\def\a{\alpha}
\def\fqs{\mathbb F_{q^2}}
\def\fq{\mathbb F_q}
\def\deg{{\rm deg}}
\def\Ap{{\rm Ap}}
\def\a{\alpha}
\def\char{\mbox{\rm Char}}
\definecolor{egreen}{RGB}{19, 126, 8}
\newcommand{\al}{\alpha}
\newcommand{\be}{\beta}
\begin{document}

\title{On Kummer extensions with one place at infinity}

\thanks{{\bf Keywords}: Kummer extensions, Weierstrass semigroup.}

\thanks{{\bf Mathematics Subject Classification (2010)}: 14H55, 11R58}

\author{Erik A. R. Mendoza}

\address{Instituto de Matemática, Universidade Federal do Rio de Janeiro, Cidade Universitária, CEP 21941-909, Rio de Janeiro, Brazil}
\email{erik@im.ufrj.br}

\thanks{The research of Erik A. R. Mendoza was partially supported by FAPERJ/RJ-Brazil (Grant 201.650/2021).}

\begin{abstract} 
Let $K$ be the algebraic closure of $\fq$. We provide an explicit description of the Weierstrass semigroup $H(Q_\infty)$ at the only place at infinity $Q_{\infty}$ of the curve $\cX$ defined by the Kummer extension with equation $y^m=f(x)$, where $f(x)\in K[x]$ is a polynomial satisfying $\gcd (m, \deg f)=1$. As a consequence, we determine the Frobenius number and the multiplicity of $H(Q_{\infty})$ in some cases, and we discuss sufficient conditions for the Weierstrass semigroup $H(Q_{\infty})$ to be symmetric. Finally, we characterize certain maximal Castle curves of type $(\cX, Q_{\infty})$.
\end{abstract}

\maketitle

\section{Introduction}
Let $K$ be the algebraic closure of the finite field $\fq$ with $q$ elements. Consider $\cX$ a nonsingular, projective, absolutely irreducible algebraic curve over $K$ with genus $g(\cX)$ and denote by  $K(\cX)$ its function field.  For a function $z \in K(\cX)$, we let $(z), (z)_\infty$ and $(z)_0$ stand for the principal, pole and zero divisor of the function $z$ in $K(\cX)$ respectively. 

Given a place $Q$ in the set of places $\cP_{K(\cX)}$ of the function field $K(\cX)$, the {\it Weierstrass semigroup} associated to the place $Q$ is given by
$$
H(Q):=\{s\in \N_0: (z)_{\infty}=sQ\text{ for some }z\in K(\cX)\},
$$
the complementary set $G(Q):=\N\setminus H(Q)$ is called the {\it gap set} at $Q$, and the Weierstrass Gap Theorem \cite[Theorem 1.6.8]{S2009} states that if $g(\cX) >0$, then there exist exactly $g(\cX)$ gaps at $Q$
$$
G(Q)=\{1=i_1<i_2<\dots<i_{g(\cX)}\leq 2g(\cX)-1\}.
$$
The smallest nonzero element of $H(Q)$ is called the multiplicity of $H(Q)$ and is denoted by $m_{H(Q)}$, the largest element of $G(Q)$ is called the Frobenius number and is denoted by $F_{H(Q)}$, and we say that the Weierstrass semigroup $H(Q)$ is symmetric if $F_{H(Q)}=2g(\cX)-1$.

The knowledge of the inner structure of the Weierstrass semigroup $H(Q)$ at one place in the function field $K(\cX)$ has various applications in the area of algebraic curves over finite fields. Among the most interesting ones we have the construction of algebraic geometry codes with good parameters, see \cite{MP2020}; the determination of the automorphism group of an algebraic curve, see \cite{MXY2016}; to decide if a place is Weierstrass, see \cite{ABQ2019}, and obtain upper bounds for the number of rational places (places of degree one) of a curve, such as the Lewittes bound \cite{L1990} which establishes that the number $\#\cX(\fq)$ of $\fq$-rational places of a curve $\cX$ defined over $\fq$ is upper bounded by 
\begin{equation}\label{LB}
\# \cX (\fq)\leq qm_{H(Q)}+1,
\end{equation}
where $Q$ is an $\fq$-rational place of $\cX$. The best-known upper bound for the number of $\fq$-rational places is the Hasse-Weil bound
$$
\# \cX(\fq)\leq q+1+2g(\cX)\sqrt{q},
$$
and a curve is called $\fq$-maximal if equality holds in the Hasse-Weil bound. 

A pointed algebraic curve $(\cX, Q)$ over $\fq$, where $Q$ is an $\fq$-rational place of $\cX$, is called a {\it Castle curve} if the semigroup $H(Q)$ is symmetric and equality holds in (\ref{LB}). Castle curves were introduced in \cite{MST2009} and have been studied due to their interesting properties related to the construction of algebraic geometry codes with good parameters and its duals, see \cite{MST2008, MST2009}.

Abdón, Borges, and Quoos \cite{ABQ2019} provided an arithmetical criterion to determine if a positive integer is an element of the gap set of $H(Q)$, where $Q$ is a totally ramified place in a Kummer extension defined by the equation $y^m=f(x)$, $f(x)\in K[x]$. As a consequence, they explicitly described the semigroup $H(Q)$ when $f(x)$ is a separable polynomial. This description was generalized by Castellanos, Masuda, and Quoos \cite{CMQ2016}, where they study the Kummer extension defined by $y^m=f(x)^{\lambda}$, where $\la\in \N$ and $f(x)\in K[x]$ is a separable polynomial satisfying $\gcd(m, \lambda \deg f)=1$. 

For a general Kummer extension with one place at infinity
\begin{equation}\label{curveXX}
\cX: \quad y^{m}=\prod_{i=1}^{r} (x-\a_i)^{\lambda_i}, \quad \la_i\in \N, \quad \text{and} \quad 1\leq \la_i < m,
\end{equation}
where $m\geq 2$ and $r\geq 2$ are integers such that $\gcd (m, q)=1$, $\al_1, \dots, \al_r\in K$ are pairwise distinct elements, $\la_0:=\sum_{i=1}^{r}\la_i$, and $\gcd (m, \la_0)=1$, the Weierstrass semigroup $H(Q_{\infty})$ at the only place at infinity $Q_{\infty}$ of $\cX$ was explicitly described in the following particular cases:
\begin{enumerate}[i)]
\item For $\la_1=\la_2=\dots=\la_r$, see \cite[Theorem 3.2]{CMQ2016}.
\item For any $\la_1$ and $\la_2=\la_3=\dots=\la_r=1$, see \cite[Remark 2.8]{TT2019}.
\end{enumerate}
This article aims to explicitly describe the Weierstrass semigroup $H(Q_{\infty})$ in the general case, that is, we determine the Weierstrass semigroup at the only place at infinity of the curve $\cX$ given in (\ref{curveXX}). Moreover, we provide a system of generators for the semigroup $H(Q_{\infty})$ and, as a consequence, we obtain interesting results including the following theorems:
\begin{thmx}[see Theorem \ref{teo_2}] Let $F_{H(Q_{\infty})}$ be the Frobenius number of the semigroup $H(Q_{\infty})$. Then
$$F_{H(Q_{\infty})}=m(r-1)-\la_0\text{ and }H(Q_{\infty})\text{ is symmetric}\quad  \Leftrightarrow \quad\la_j\mid m \text{ for each }j=1, \dots, r.
$$
\end{thmx}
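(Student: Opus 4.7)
The plan is to reduce the theorem to two independent computations: (i) an evaluation of the Frobenius number $F_{H(Q_{\infty})}$ from the explicit description of $H(Q_{\infty})$ obtained earlier in the paper, and (ii) a Riemann--Hurwitz calculation of the genus $g(\cX)$. Matching these two outputs will produce the symmetry equivalence. Throughout I use the standing hypotheses $\gcd(m, \la_0) = 1$ and $1 \leq \la_i < m$.

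For (i), I invoke the generating set for $H(Q_{\infty})$ established as the main result of the paper. Using it, I would first verify that $m(r-1) - \la_0$ is a gap at $Q_{\infty}$: the pole order at $Q_{\infty}$ of any function decomposes into a contribution of type $am$ and a contribution coming from the $\la_i$, and the coprimality $\gcd(m, \la_0) = 1$ rules out the value $m(r-1) - \la_0$. Next I would show that every integer $s$ with $s > m(r-1) - \la_0$ lies in $H(Q_{\infty})$ by producing, for each such $s$, an explicit function (typically a product of factors $x - \a_i$ times a suitable power of $y$) whose pole divisor equals $sQ_{\infty}$. This is essentially a numerical semigroup argument performed inside the generating set.

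For (ii), I apply Riemann--Hurwitz to the degree-$m$ cover $\cX \to \mathbb{P}^1$ given by $(x,y) \mapsto x$. Above each $\a_i$ there are $\gcd(m, \la_i)$ places with ramification index $m/\gcd(m, \la_i)$, and above $\infty$ there is a single totally ramified place $Q_{\infty}$ since $\gcd(m, \la_0) = 1$. Summing the contributions yields
\begin{equation*}
2g(\cX) - 1 \;=\; m(r-1) - \sum_{i=1}^{r} \gcd(m, \la_i).
\end{equation*}
Combining with (i), the semigroup is symmetric, i.e.\ $F_{H(Q_{\infty})} = 2g(\cX) - 1$, iff $\sum_{i=1}^{r} \gcd(m, \la_i) = \la_0 = \sum_{i=1}^{r} \la_i$. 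Since $1 \leq \la_i < m$ forces $\gcd(m, \la_i) \leq \la_i$, the equality of the two sums forces $\gcd(m, \la_i) = \la_i$, i.e.\ $\la_i \mid m$, for every $i$.

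The main obstacle I anticipate is the second half of (i): proving that every $s > m(r-1) - \la_0$ belongs to $H(Q_{\infty})$. In the previously known cases (all $\la_i$ equal, or all but one equal to $1$) this reduces to a single clean numerical identity, but in the general situation one must juggle all the residues $\la_i \pmod m$ simultaneously and ensure that the generators produced by the main theorem cover every pole order in the target range without gaps. Showing that $m(r-1) - \la_0$ itself is a gap should be comparatively routine, following from $\gcd(m, \la_0) = 1$ and the structural form of the generators.
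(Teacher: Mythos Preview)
Your plan has a genuine gap: step (i) is false in general. The equality $F_{H(Q_\infty)} = m(r-1) - \la_0$ does \emph{not} hold unconditionally; it is part of the left-hand side of the biconditional, not a standing fact. Take $m=5$, $r=2$, $\la_1=\la_2=2$, so $\la_0=4$. By the case $\la_1=\cdots=\la_r$ already treated in the paper one has $H(Q_\infty)=\langle 5,2\rangle$, hence $F_{H(Q_\infty)}=3$, while $m(r-1)-\la_0=1$. Thus $s=3>1$ is a gap, and the ``second half of (i)'' that you correctly flagged as the main obstacle is in fact unprovable. Note also that in this example $H(Q_\infty)$ is symmetric ($g(\cX)=2$, Frobenius $3$) even though $2\nmid 5$; so symmetry alone is \emph{not} equivalent to the divisibility condition, and the theorem is genuinely a statement about the conjunction on the left.

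The paper's argument avoids this by never asserting (i) unconditionally. For $(\Leftarrow)$ it uses a structural identity: when every $\la_j\mid m$, the multiset description of the $\eta_k$ yields $\eta_k+\eta_{r+\la_0-1-k}=m$ for $r\le k\le\la_0-1$, which via a short lemma forces $\epsilon_{r-1}>\epsilon_k$ for all such $k$; combined with the earlier formula $F_{H(Q_\infty)}=\max\{\epsilon_{r-1},\dots,\epsilon_{\la_0-1}\}$ this gives $F_{H(Q_\infty)}=\epsilon_{r-1}=m(r-1)-\la_0$, and then your genus computation (ii) yields symmetry. For $(\Rightarrow)$ the paper does exactly your final paragraph: assuming both $F_{H(Q_\infty)}=m(r-1)-\la_0$ and symmetry, one matches with $2g(\cX)-1=m(r-1)-\sum_{j}(m,\la_j)$ to obtain $\sum_j\la_j=\sum_j(m,\la_j)$, hence $\la_j\mid m$ for each $j$. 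Your part (ii) and the termwise comparison $\gcd(m,\la_j)\le\la_j$ are correct and are used in the paper; what needs replacing is only the unconditional claim in (i).
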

\begin{thmx}[see Theorem \ref{teo_sym}] 
Suppose that $\gcd (m, \la_j)=1$ for each $j=1, \dots, r$. Then the following statements are equivalent:
\begin{enumerate}[i)]
\item $H(Q_{\infty})=\langle m, r \rangle$.
\item $\la_1=\la_2=\dots =\la_r$.
\end{enumerate}
If in addition $r<m$ then all these statements are equivalent to the following one:
\begin{enumerate}[i)]
\item[iii)] $H(Q_{\infty})$ is symmetric. 
\end{enumerate}
\end{thmx}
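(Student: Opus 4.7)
The plan is to reduce everything to a single computation of the Apery set of $H(Q_\infty)$ with respect to $m$ (the pole order of $x$ at $Q_\infty$). Under the hypothesis $\gcd(m,\la_j)=1$ for every $j$, the explicit description of $H(Q_\infty)$ proved earlier in the paper specialises to give
\[
\Ap(H(Q_\infty),m)=\{w_0,w_1,\dots,w_{m-1}\}, \qquad w_k := \sum_{i=1}^{r}(k\la_i \bmod m),
\]
with $w_k$ realised as the pole order at $Q_\infty$ of the function $y^k\prod_{i=1}^{r}(x-\al_i)^{-\lfloor k\la_i/m\rfloor}$ (which has a pole only at $Q_\infty$); the residues $w_k\equiv k\la_0\pmod m$ exhaust $\{0,1,\dots,m-1\}$ thanks to $\gcd(m,\la_0)=1$. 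Every conclusion will be read off this single formula.

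For (ii)$\Rightarrow$(i), specialising to $\la_1=\cdots=\la_r=\la$ gives $w_k=r\,(k\la\bmod m)$, and since $\gcd(m,\la)=1$ the map $k\mapsto k\la\bmod m$ permutes $\{0,1,\dots,m-1\}$, so that $\{w_k\}=\{0,r,2r,\dots,(m-1)r\}=\Ap(\langle m,r\rangle,m)$; here $\gcd(m,r)=1$ comes for free from $\gcd(m,r\la)=\gcd(m,\la_0)=1$. For the converse (i)$\Rightarrow$(ii), the Apery set of $\langle m,r\rangle$ with respect to $m$ has maximum $(m-1)r$, forcing $\max_k w_k=(m-1)r$; but each summand $k\la_i\bmod m$ is at most $m-1$, and equality in $r(m-1)$ forces $k\la_i\equiv -1\pmod m$ for \emph{every} $i$ at one common $k$. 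Combined with $\gcd(m,\la_i)=1$, this compels $\la_i\equiv -k^{-1}\pmod m$ uniformly in $i$, so $\la_1=\cdots=\la_r$ as integers in $\{1,\dots,m-1\}$. For the third item, Riemann-Hurwitz applied to the degree-$m$ cover $\cX\to\P^1$ --- totally ramified above each $\al_i$ and above infinity thanks to $\gcd(m,\la_j)=1$ --- yields $g(\cX)=(m-1)(r-1)/2$; since $F_{H(Q_\infty)}=\max_k w_k - m$, symmetry rewrites as $\max_k w_k=(m-1)r$, exactly the condition characterising (ii), while (ii)$\Rightarrow$(i)$\Rightarrow$(iii) follows because $\langle m,r\rangle$ is symmetric whenever $\gcd(m,r)=1$. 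The assumption $r<m$ is used to guarantee that $r$ is the multiplicity of $\langle m,r\rangle$, keeping the presentation in (i) nondegenerate.

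The step I expect to require the most care is the initial Apery-set identification --- verifying that each $w_k$ really is the \emph{smallest} element of $H(Q_\infty)$ in its residue class modulo $m$, rather than merely an admissible value. This rests on the explicit description of $H(Q_\infty)$ proved earlier in the paper, together with the observation that $\gcd(m,\la_0)=1$ distributes the pole orders of the different components $y^k g_k(x)$ into pairwise distinct residue classes modulo $m$, precluding any cancellation that would lower the minimum within a given class below $w_k$. Once this Apery identification is locked in, the rest of the argument reduces to combinatorics on $\{0,1,\dots,m-1\}$.
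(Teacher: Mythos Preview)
Your proof is correct and takes a genuinely different route from the paper's. The paper works throughout with the Ap\'ery set $\Ap(H(Q_\infty),\la_0)$, relying on the $\eta_k$ machinery of Lemma~\ref{lemma1} and Theorem~\ref{theorem1}; its arguments for $(i)\Rightarrow(ii)$ and $(iii)\Rightarrow(i)$ are lengthy, proceeding through chains of inequalities on the $\eta_k$ and an iterative descent on the Ap\'ery elements. You instead compute $\Ap(H(Q_\infty),m)$ directly via the pole orders $w_k=\sum_i(k\la_i\bmod m)$, which reduces all three equivalences to the single numerical condition $\max_k w_k=(m-1)r$. This is considerably shorter, and in fact your argument for $(iii)\Leftrightarrow(ii)$ never invokes $r<m$: the identity $F_{H(Q_\infty)}=\max_k w_k-m$ and the value $2g(\cX)-1=(m-1)r-m$ hold regardless, so your proof actually strengthens the paper's statement by dropping that hypothesis. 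Your closing remark that $r<m$ is ``used to guarantee that $r$ is the multiplicity of $\langle m,r\rangle$'' is therefore a red herring---nothing in your argument requires it.

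The one step you correctly flag as needing care---that the $w_k$ are the \emph{minimal} representatives in their residue classes---is most cleanly settled by Proposition~\ref{prop2_pre} rather than by invoking Theorem~\ref{theorem1}. Since $\gcd(m,\la_i)=1$ for each $i$, summing over $k$ gives $\sum_{k=0}^{m-1} w_k=\sum_i\sum_k(k\la_i\bmod m)=r\cdot\tfrac{m(m-1)}{2}$; and since $w_k\equiv k\la_0\pmod m$ with $\gcd(m,\la_0)=1$, the residues $w_k\bmod m$ permute $\{0,\dots,m-1\}$, so $\sum_k(w_k\bmod m)=\tfrac{m(m-1)}{2}$. Subtracting and dividing by $m$ yields $\sum_k\lfloor w_k/m\rfloor=\tfrac{(r-1)(m-1)}{2}=g(\cX)$, which is exactly the criterion of Proposition~\ref{prop2_pre}. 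With this two-line verification in place your proof is complete.
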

\begin{thmx}[see Theorem \ref{theorema3}]
Suppose that $\cX$ is defined over $\fqs$, $\gcd(m, \la_j)=1$ for $j=1, \dots, r$ and $r<m$. Then
$$
(\cX, Q_{\infty}) \text{ is } \fqs\text{-maximal Castle curve}\Leftrightarrow \cX \text{ is } \fqs\text{-maximal, }\la_1=\dots=\la_r,\text{ and }m=q+1.
$$
\end{thmx}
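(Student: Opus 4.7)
The plan is to combine Theorem B with the standard genus formula for Kummer covers and the two counting bounds (Lewittes and Hasse--Weil). Throughout the argument the standing hypotheses $\gcd(m,\la_j)=1$ for all $j$ and $r<m$ are in force.

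First, recall that $(\cX,Q_{\infty})$ being a Castle curve means, by definition, that $H(Q_{\infty})$ is symmetric and that the Lewittes bound $\#\cX(\fqs)\leq q^2 m_{H(Q_{\infty})}+1$ is attained with equality. Under the standing hypotheses, Theorem B furnishes the key equivalence: $H(Q_{\infty})$ is symmetric if and only if $\la_1=\dots=\la_r$, and in that case $H(Q_{\infty})=\langle m,r\rangle$, so the multiplicity is $m_{H(Q_{\infty})}=r$. Hence the symmetric-semigroup condition translates directly into the equal-exponents condition, and simultaneously pins down the multiplicity needed to test Lewittes equality.

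Next, I would compute $g(\cX)$ under the reduction $\la_1=\dots=\la_r=:\la$. Picking $k$ with $k\la\equiv 1\pmod m$ and setting $z:=y^k\prod_{i=1}^{r}(x-\a_i)^{-(k\la-1)/m}$, a direct check shows that $z^m=\prod_{i=1}^{r}(x-\a_i)$, and conversely $y$ lies in $K(x,z)$. Thus $\cX$ is birational to the standard Kummer cover of a separable polynomial of degree $r$. Since $\gcd(m,\la_0)=\gcd(m,r\la)=1$ together with $\gcd(m,\la)=1$ forces $\gcd(m,r)=1$, the classical Riemann--Hurwitz computation applies and gives
$$
g(\cX)=\frac{(m-1)(r-1)}{2}.
$$

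With this in hand the proof closes quickly. Hasse--Weil equality for the maximality of $\cX$ reads $\#\cX(\fqs)=q^2+1+q(m-1)(r-1)$, while Lewittes equality for the Castle condition reads $\#\cX(\fqs)=q^2 r+1$. Equating these and cancelling $r-1\neq 0$ (recall $r\geq 2$) yields $m=q+1$, proving $(\Rightarrow)$. For $(\Leftarrow)$, Theorem B supplies the symmetry and $m_{H(Q_{\infty})}=r$, and substituting $m=q+1$ into the Hasse--Weil equality for maximality gives $\#\cX(\fqs)=q^2+q^2(r-1)+1=q^2 r+1$, which is precisely the Lewittes equality; hence $(\cX,Q_{\infty})$ is a Castle curve.

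The main obstacle I anticipate is the middle step: recognizing and verifying the birational reduction of the twisted Kummer extension $y^m=f(x)^{\la}$ to the standard one $z^m=f(x)$ when $\gcd(m,\la)=1$, which is what allows the classical genus formula to be invoked. Once that reduction is in place, the rest is bookkeeping combining Theorem B, the genus formula, and the two point-count bounds.
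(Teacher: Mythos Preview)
Your argument is correct and follows essentially the same route as the paper: invoke Theorem~B to convert the symmetry of $H(Q_\infty)$ into the equal-exponents condition and to read off $m_{H(Q_\infty)}=r$, then equate the Hasse--Weil and Lewittes point counts to force $m=q+1$. The paper merely packages this into two short lemmas rather than a single pass.

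One remark: the birational reduction you flag as ``the main obstacle'' is an unnecessary detour. The genus formula already recorded in the paper,
\[
g(\cX)=\frac{(m-1)(r-1)+r-\sum_{i=1}^{r}(m,\la_i)}{2},
\]
gives $g(\cX)=(m-1)(r-1)/2$ immediately from the standing hypothesis $(m,\la_j)=1$ for all $j$, with no need to first establish $\la_1=\dots=\la_r$ or to pass to the model $z^m=f(x)$.
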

This paper is organized as follows. In Section $2$ we introduce the preliminaries and notation that will be used throughout this paper. In Section $3$ we present the main result of this paper which gives the explicit description of the semigroup $H(Q_{\infty})$ (see Theorem \ref{theorem1}). In Section $4$ we provide an explicit description of the gap set $G(Q_{\infty})$ (see Proposition \ref{prop_1}), we study the Frobenius number and the multiplicity of the semigroup $H(Q_{\infty})$ establishing a relationship between them (see Proposition \ref{prop_3}), and provide sufficient conditions for the semigroup $H(Q_{\infty})$ to be symmetric (see Theorems \ref{teo_2} and \ref{teo_sym}). In Section $5$, we characterize certain $\fqs$-maximal Castle curves of type $(\cX, Q_{\infty})$ (see Theorem \ref{theorema3}).

\section{Preliminaries and notation}
Throughout this article, we let $q$ be the power of a prime $p$, $\fq$ the finite field with $q$ elements, and $K$ the algebraic closure of $\fq$. For $a$ and $b$ integers, we denote by $(a, b)$ the greatest common divisor of $a$ and $b$, and by $b \mod{a} $ the smallest non-negative integer congruent with $b$ modulo $a$. For $c\in \R$, we denote by $\floor*{c}$, $\ceil*{c}$ and $\{c\}$ the floor, ceiling and fractional part functions of $c$ respectively. Moreover, to differentiate standard sets from multisets (that is, sets that can contain repeated occurrences of elements), we use the usual symbol $`\{\}$' for standard sets and the symbol $`\{\!\!\{\}\!\!\}$' for multisets. For a multiset $M$, the set of distinct elements of $M$ is called the support of $M$ and is denoted by $M^*$, the number of occurrences of an element $x\in M^*$ in the multiset $M$ is called the multiplicity of $x$ and is denoted by $m_M(x)$, and the cardinality of the multiset $M$ is defined as the sum of the multiplicities of all elements of $M^*$. We say that two multisets $M_1$ and $M_2$ are equal if $M_1^*=M_2^*$ and $m_{M_1}(x)=m_{M_2}(x)$ for each $x$ in the support.

\subsection{Numerical semigroups} A numerical semigroup is a subset $H$ of $\N_0$ such that $H$ is closed under addition, $H$ contains the zero, and the complement $\N_0\setminus H$ is finite. The elements of $G:=\N_0\setminus H$ are called the gaps of the numerical
semigroup $H$ and $g_H:=\#G$ is its genus. The largest gap is called the Frobenius number of $H$ and is denoted by $F_{H}$. The smallest nonzero element of $H$ is called the multiplicity of the semigroup and is denoted by $m_H$. The numerical semigroup $H$ is called symmetric if $F_H=2g_H-1$. Moreover, we say that the set $\{a_1, \dots, a_d\}\subset H$ is a system of generators of the numerical semigroup $H$ if
$$
H=\langle a_1, \dots, a_d\rangle:=\{t_1a_1+\cdots + t_da_d: t_1, \dots, t_d\in \N_0\}.
$$ 
We say that a system of generators of $H$ is a minimal system of generators if none of its proper subsets generates the numerical semigroup $H$. The cardinality of a minimal system of generators is called the embedding dimension of $H$ and will be denoted by $e_{H}$.

Let $n$ be a nonzero element of the numerical semigroup $H$. The
Apéry set of $n$ in $H$ is defined by
$$
\Ap (H,n) := \{s\in  H : s-n \notin H\}.
$$
It is known that the cardinality of $\Ap (H,n)$ is $n$. Moreover, several important results are associated with the Apéry set.
\begin{proposition}\cite[Proposition 2.12]{RG2009}\label{prop2_pre}
Let $H$ be a numerical semigroup and $S\subseteq H$ be a subset that consists of $n$ elements that form a complete set of representatives for the congruence classes of $\Z$ modulo $n\in H$. Then
$$
S=\Ap (H, n)\quad \text{if and only if}\quad g_H=\sum_{a\in S}\floor*{\frac{a}{n}}.
$$
\end{proposition}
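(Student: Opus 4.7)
The plan is to decompose everything by residue class modulo $n$. For each $i\in\{0,1,\dots,n-1\}$, let $w_i$ denote the smallest element of $H$ congruent to $i$ modulo $n$; such a $w_i$ exists because $\N_0\setminus H$ is finite. A short argument (using $n\in H$) identifies these with the Apéry set: minimality of $w_i$ forces $w_i-n\notin H$, so $w_i\in\Ap(H,n)$, while conversely for any $s\in H$ with $s\equiv i\pmod n$ and $s>w_i$ one may write $s=w_i+kn$ with $k\ge 1$, so $s-n\in H$ and hence $s\notin\Ap(H,n)$. Therefore $\Ap(H,n)=\{w_0,w_1,\dots,w_{n-1}\}$.

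Next I would establish the identity
$$g_H=\sum_{i=0}^{n-1}\floor*{\frac{w_i}{n}},$$
which directly yields the ``only if'' direction by taking $S=\Ap(H,n)$. The key observation is that, since $n\in H$, the elements of $H$ lying in residue class $i$ are exactly $\{w_i+kn:k\ge 0\}$; consequently the gaps in class $i$ are $\{i,i+n,\dots,w_i-n\}$ (empty when $w_i=0$), contributing exactly $\floor*{w_i/n}$ elements to $\N_0\setminus H$. Summing over all $n$ residue classes gives the formula.

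For the ``if'' direction, write the given $S$ as $\{s_0,\dots,s_{n-1}\}$ with $s_i\equiv i\pmod n$. Since each $s_i\in H$, minimality of $w_i$ forces $s_i\ge w_i$, and therefore $\floor*{s_i/n}\ge \floor*{w_i/n}$ with equality precisely when $s_i=w_i$. It follows that
$$\sum_{i=0}^{n-1}\floor*{\frac{s_i}{n}}\ge \sum_{i=0}^{n-1}\floor*{\frac{w_i}{n}}=g_H,$$
so the hypothesis $\sum_{a\in S}\floor*{a/n}=g_H$ forces equality in every summand, giving $s_i=w_i$ for every $i$ and hence $S=\Ap(H,n)$. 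The only delicate step is the residue-class decomposition of $H$: once one knows that each class consists of the arithmetic progression $w_i,w_i+n,w_i+2n,\dots$, the rest of the proof is a direct term-by-term comparison of two finite sums.
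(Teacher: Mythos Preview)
Your argument is correct and is the standard one: identify $\Ap(H,n)$ with the minimal representatives $w_i$ in each residue class, count the gaps class by class to obtain Selmer's formula $g_H=\sum_i\lfloor w_i/n\rfloor$, and then use the termwise inequality $\lfloor s_i/n\rfloor\ge\lfloor w_i/n\rfloor$ to force $S=\Ap(H,n)$ under the equality hypothesis. The paper does not supply its own proof of this proposition---it is quoted as \cite[Proposition~2.12]{RG2009}---and your write-up is essentially the argument given there.
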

\begin{proposition}\cite[Proposition 4.10]{RG2009}\label{prop3_pre}
Let $H$ be a numerical semigroup and $n$ be a nonzero element of $H$. Let $\Ap (H,n) = \{a_0 < a_1 <\dots< a_{n-1}\}$ be the Apéry set of $n$ in $H$. Then $H$ is symmetric if and only if 
$$
a_i+a_{n-1-i}= a_{n-1} \text{ for each }i=0, \dots, n-1.
$$
\end{proposition}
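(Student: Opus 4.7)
The plan is to exploit two standard facts about the Apéry set together with the classical reformulation of symmetry: $H$ is symmetric if and only if for every $z\in\Z$, exactly one of $z$ and $F_H-z$ belongs to $H$ (this follows from the definition $F_H=2g_H-1$ by a counting argument, since $a+b=F_H$ with $a,b\in H$ would force $F_H\in H$). The two Apéry facts I would rely on are: (a) $a_0=0$ and $a_{n-1}=F_H+n$ (the first is immediate; the second because $F_H+n\in H$ while $a-n\notin H$ for every $a\in\Ap(H,n)$, so $a\le F_H+n$); (b) since the elements of $\Ap(H,n)$ form a complete set of residues modulo $n$, writing $a_i=\lfloor a_i/n\rfloor\cdot n+r_i$ with $\{r_0,\dots,r_{n-1}\}=\{0,1,\dots,n-1\}$ and applying Proposition \ref{prop2_pre} yields the key identity
$$
\sum_{i=0}^{n-1}a_i \;=\; n\sum_{i=0}^{n-1}\left\lfloor\frac{a_i}{n}\right\rfloor + \frac{n(n-1)}{2} \;=\; n\,g_H+\frac{n(n-1)}{2}.
$$

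For the forward direction, I would consider the map $\phi\colon\Ap(H,n)\to\Z$ defined by $\phi(w):=a_{n-1}-w=(F_H+n)-w$ and check that it is a well-defined involution of $\Ap(H,n)$. Using the reformulation: $w\in H$ implies $F_H-w\notin H$, which is precisely $\phi(w)-n\notin H$; and if $\phi(w)$ itself were not in $H$, symmetry would give $F_H-\phi(w)=w-n\in H$, contradicting $w\in\Ap(H,n)$. Hence $\phi(w)\in\Ap(H,n)$, and since $\phi(\phi(w))=w$ and $w\mapsto a_{n-1}-w$ is strictly decreasing, $\phi$ is an order-reversing involution, so it must send $a_i$ to $a_{n-1-i}$, yielding $a_i+a_{n-1-i}=a_{n-1}$.

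For the converse, I would bypass any involution argument and count. Summing the hypothesis $a_i+a_{n-1-i}=a_{n-1}$ over $i=0,\dots,n-1$ gives $2\sum a_i=n\,a_{n-1}$. Combining with the sum identity from (b) yields $n\,a_{n-1}/2=n\,g_H+n(n-1)/2$, hence $a_{n-1}=2g_H+n-1$, so $F_H=a_{n-1}-n=2g_H-1$ and $H$ is symmetric.

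The main subtle point is the well-definedness of $\phi$ in the forward direction, which hinges on both clauses of the Apéry definition ($w\in H$ \emph{and} $w-n\notin H$) being used in tandem with both implications of the symmetric reformulation; neither clause alone suffices. Once this is in hand, the order-reversing property is automatic and matches the indices $i\leftrightarrow n-1-i$, while the backward direction collapses to the one-line arithmetic manipulation above via Proposition \ref{prop2_pre}.
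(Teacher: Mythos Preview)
The paper does not give its own proof of this proposition; it is quoted verbatim from \cite[Proposition 4.10]{RG2009} as a preliminary fact, so there is nothing to compare against. Your argument is correct in both directions: the forward implication via the order-reversing involution $w\mapsto a_{n-1}-w$ on $\Ap(H,n)$ (whose well-definedness you rightly pin on using both clauses of the Ap\'ery condition against both implications of the symmetry criterion), and the converse by summing the relations and combining the identity $\sum_i a_i=ng_H+n(n-1)/2$ from Proposition~\ref{prop2_pre} with $a_{n-1}=F_H+n$, are both standard and sound.
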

On the other hand, the following result characterizes the elements of a numerical semigroup generated by two elements and will be useful in this paper.
\begin{proposition}\cite[Lemma 1]{R2005}\label{prop_pre}
Let $x\in \Z$ and let $n_1, n_2\geq 2$ be positive integers such that $(n_1, n_2)=1$. Then $x\not\in \langle n_1, n_2 \rangle$ if and only if $x=n_1n_2-an_1-bn_2$ for some $a, b\in \N$.
\end{proposition}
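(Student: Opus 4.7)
The plan is to prove both implications directly using the standard normalization of integer representations via $(n_1,n_2)=1$. Since $\gcd(n_1,n_2)=1$, every $x\in\Z$ admits a representation $x=un_1+vn_2$ with $u,v\in\Z$, and the full family of representations is $(u+kn_2,v-kn_1)$ for $k\in\Z$. In particular, there is a unique representation with $0\le u\le n_2-1$.

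For the ``if'' direction I would argue by contradiction: suppose $x=n_1n_2-an_1-bn_2$ with $a,b\in\N$ and also $x=s_1n_1+s_2n_2$ with $s_1,s_2\in\N_0$. Rearranging yields
$$
(s_1+a)n_1+(s_2+b)n_2=n_1n_2.
$$
Reducing modulo $n_1$ and invoking $(n_1,n_2)=1$ gives $n_1\mid s_2+b$, and since $s_2+b\ge b\ge1$ we conclude $s_2+b\ge n_1$. The symmetric argument modulo $n_2$ produces $s_1+a\ge n_2$. Plugging these inequalities back into the displayed identity forces the left-hand side to be at least $2n_1n_2>n_1n_2$, a contradiction. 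Therefore $x\notin\langle n_1,n_2\rangle$.

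For the ``only if'' direction I would normalize: write $x=un_1+vn_2$ with $0\le u\le n_2-1$ (possible by the preceding remark). If $v\ge0$ then $x\in\langle n_1,n_2\rangle$, contradicting the hypothesis, so $v\le-1$. Then set
$$
a:=n_2-u,\qquad b:=-v,
$$
so that $a\in\{1,\dots,n_2\}\subset\N$ and $b\in\N$, and by construction
$$
n_1n_2-an_1-bn_2=(n_2-a)n_1+(-b)n_2=un_1+vn_2=x,
$$
as desired.

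This is essentially the classical Sylvester--Frobenius symmetry for two coprime generators, so no step is truly difficult; the only thing requiring care is the normalization step, where one must check that restricting $u$ to $[0,n_2-1]$ is compatible with having both $a\ge1$ and $b\ge1$. The upper bound $u\le n_2-1$ precisely delivers $a\ge1$, while the negativity $v\le-1$ forced by the assumption $x\notin\langle n_1,n_2\rangle$ delivers $b\ge1$, and these two conditions are exactly what the statement requires.
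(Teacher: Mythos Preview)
Your proof is correct. Note, however, that the paper does not supply its own proof of this proposition: it is quoted from \cite{R2005} and used as a preliminary fact, so there is no in-paper argument to compare against. Your argument is the standard one via normalized representations and the Sylvester--Frobenius symmetry, and every step is sound.
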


\subsection{Function Fields} Let $\cX$ be a nonsingular, projective, absolutely irreducible algebraic curve over $K$ with genus $g(\cX)$ and $K(\cX)$ be the function field of $\cX$. For each place $Q\in \cP_{K(\cX)}$, the Weierstrass semigroup $H(Q)$ has the structure of a numerical semigroup. Moreover, it is a well-known fact that for all but finitely many places $Q\in\cP_{K(\cX)}$, the gap set is always the same. This set is called the gap sequence of $\cX$. The places for which the gap set is not equal to the gap sequence of $\cX$ are called Weierstrass places.
 
Several upper bounds for the number of rational places of algebraic curves are available in the literature. The Hasse-Weil bound states that for a curve $\cX$ defined over $\fq$, 
$$
\# \cX(\fq)\leq q+1+2g(\cX)\sqrt{q}.
$$
The curve $\cX$ is called $\fq$-maximal if equality holds in the Hasse-Weil bound. Among other upper bounds for the number of rational places, we have the Lewittes bound \cite{L1990}.
\begin{theorem}[Lewittes bound]\label{Lbound} Let $\cX$ be a curve over $\fq$ and let $Q$ be
a rational place of $\cX$. Then
$$
\# \cX (\fq)\leq qm_{H(Q)}+1.
$$
\end{theorem}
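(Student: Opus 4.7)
My plan is to give the standard function-theoretic proof of the Lewittes bound, which rests on producing a function whose pole divisor is supported only at $Q$ and evaluating it at the other rational places.

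First, I would fix $m:=m_{H(Q)}$ and find a function $f\in K(\cX)$, defined over $\fq$, whose pole divisor is exactly $(f)_{\infty}=mQ$. The existence of some $f\in K(\cX)$ with this property is immediate from $m\in H(Q)$. To get $f$ defined over $\fq$, I would appeal to the fact that, since $Q$ is $\fq$-rational, the Riemann--Roch space $L(mQ)$ is stable under the action of the Frobenius and therefore admits a basis consisting of functions defined over $\fq$; moreover $m\in H(Q)$ means $\dim L(mQ)>\dim L((m-1)Q)$, so at least one such basis element must have a pole of exact order $m$ at $Q$.

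Next I would exploit $f$ to bound the remaining rational places. For every $P\in \cX(\fq)\setminus\{Q\}$ the function $f$ is regular at $P$ and, being defined over $\fq$, satisfies $f(P)\in\fq$. This gives a well-defined map
$$
\varphi:\cX(\fq)\setminus\{Q\}\longrightarrow \fq,\qquad P\mapsto f(P).
$$
For each $a\in\fq$ the fiber $\varphi^{-1}(a)$ consists of zeros of $f-a$. Since $(f-a)_{\infty}=(f)_{\infty}=mQ$, the function $f-a$ has degree $m$ as a divisor, hence at most $m$ zeros (counted without multiplicity). Consequently
$$
\#\bigl(\cX(\fq)\setminus\{Q\}\bigr)=\sum_{a\in \fq}\#\varphi^{-1}(a)\leq q\cdot m,
$$
and adding the place $Q$ itself yields the claimed bound $\#\cX(\fq)\leq qm_{H(Q)}+1$.

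The only non-routine point in this plan is the descent step, i.e.\ justifying that $f$ may be chosen over $\fq$; without this, the evaluation $f(P)\in\fq$ is not guaranteed. Everything else reduces to the basic fact that a nonconstant rational function on $\cX$ has as many zeros as poles counted with multiplicity, applied to the family $\{f-a\}_{a\in\fq}$.
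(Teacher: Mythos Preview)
Your argument is correct and is essentially the standard proof going back to Lewittes. Note, however, that the paper does not actually supply a proof of this theorem: it is quoted as a known result with a reference to \cite{L1990}, so there is no ``paper's own proof'' to compare against. Your descent step (choosing $f$ defined over $\fq$ via an $\fq$-basis of $L(mQ)$) and the fiber-counting over the evaluation map $P\mapsto f(P)$ are exactly the ingredients of the original argument, and both are handled cleanly.
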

For more on numerical semigroups and function fields, we refer to the books \cite{RG2009} and \cite{S2009} respectively.

\section{The semigroup $H(Q_{\infty})$}
Consider the algebraic curve
\begin{equation*}
\cX: \quad y^{m}=\prod_{i=1}^{r} (x-\a_i)^{\lambda_i}, \quad \la_i\in \N, \quad \text{and} \quad 1\leq \la_i < m,
\end{equation*}
where $m\geq 2$ and $r\geq 2$ are positive integers such that $p\nmid m$, $\al_1, \dots, \al_r\in K$ are pairwise distinct elements, $\la_0:=\sum_{i=1}^{r}\la_i$, and $(m, \la_0)=1$. By \cite[Proposition 3.7.3]{S2009}, this curve has genus 
\begin{equation}\label{genusX}
g(\cX)=\frac{(m-1)(r-1)+r-\sum_{i=1}^{r}(m, \la_i)}{2}.
\end{equation}

In this section, as one of our main results, we provide an explicit description of the Weierstrass semigroup $H(Q_{\infty})$ at the only place at infinity $Q_{\infty}$ of $\cX$. We start by recalling the property described in \cite[p.~94]{GKP1994}, which states that, for $m$ and $\la $ positive integers,
\begin{equation}\label{properties_floor_ceil}
\sum_{i=1}^{\la-1}\floor*{\frac{im}{\la}}=\frac{(m-1)(\la-1)+(m, \la)-1}{2}.
\end{equation}

To prove the main result of this section, we need the following technical lemma.
\begin{lemma}\label{lemma1}
Let $r, m, \la_0, \la_1, \la_2, \dots, \la_r$ be positive integers such that $\la_0=\sum_{i=1}^{r}\la_i$ and $r<\la_0$. For $k\in \{r, \dots, \la_0-1\}$, we define 
$$
\eta_k:=\max\left\{\rho_{s_1, \dots, s_r} \, : \, \sum_{i=1}^{r}s_i=k,\, 1\leq s_i\leq \la_i\right\}, \text{ where } \rho_{s_1, \dots, s_r} :=  \min_{1\leq i \leq r} \floor*{\frac{s_im}{\la_i}}.
$$
Then the sequence $\eta_r\leq \eta_{r+1}\leq  \dots\leq  \eta_{\la_0-1}$ is characterized by the following equality of multisets
\begin{equation}\label{multiseteq}
\Bigg\{\!\!\!\Bigg\{\eta_k: r\leq k\leq \la_0-1 \Bigg\}\!\!\!\Bigg\}=\Bigg\{\!\!\!\Bigg\{\floor*{\frac{s_im}{\la_i}}: 1\leq s_i<\la_i, \, 1\leq i\leq r\Bigg\}\!\!\!\Bigg\}.
\end{equation}
In particular, we have
\begin{equation*}
\sum_{k=r}^{\la_0-1}\eta_{k}= \frac{(m-1)(\la_0-r)-r+\sum_{i=1}^{r}(m, \la_i)}{2}.
\end{equation*}
\end{lemma}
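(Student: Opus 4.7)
The plan is to reduce the max-min optimization defining $\eta_k$ to a one-parameter threshold problem, and then compare upper level sets in the two multisets. First I would establish the monotonicity $\eta_k\le\eta_{k+1}$: given an optimizer $(s_1,\dots,s_r)$ for $\eta_k$, at least one coordinate satisfies $s_j<\la_j$ because $\sum s_i=k<\la_0$, and incrementing $s_j$ produces a feasible tuple at index $k+1$ whose minimum $\floor*{s_i'm/\la_i}$ is at least $\eta_k$, since the floors are non-decreasing in the numerator.

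Next, for each integer $t\ge 1$ I would introduce the threshold $a_i(t):=\min\{s\ge 1:\floor*{sm/\la_i}\ge t\}=\ceil*{t\la_i/m}$ and prove the equivalence
$$
\eta_k\ge t \quad\Longleftrightarrow\quad \sum_{i=1}^r a_i(t)\le k,
$$
valid for $k\in\{r,\dots,\la_0-1\}$ and $1\le t\le m$. The forward direction is immediate since $\floor*{s_im/\la_i}\ge t$ forces $s_i\ge a_i(t)$. For the converse, begin at $s_i=a_i(t)$, which satisfies $s_i\le\la_i$ because $t\le m$, and inflate individual coordinates by one unit at a time while respecting $s_i\le\la_i$ until the sum reaches $k\le\la_0-1<\la_0$; the floors never decrease, so the constraint $\min_i\floor*{s_im/\la_i}\ge t$ is preserved.

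With this reformulation, the multiset equality \eqref{multiseteq} follows by matching upper level counts. On the left, since $\eta$ is monotone and $\sum_i a_i(t)\ge r$ (each $a_i(t)\ge 1$ for $t\ge 1$), the number of $k\in\{r,\dots,\la_0-1\}$ with $\eta_k\ge t$ equals $\la_0-\sum_i a_i(t)$. On the right, for each $i$ there are $\la_i-a_i(t)$ values of $s\in\{1,\dots,\la_i-1\}$ with $\floor*{sm/\la_i}\ge t$, and summing over $i$ yields the same total $\la_0-\sum_i a_i(t)$. Because both multisets have cardinality $\la_0-r$ and matching level counts for every $t\ge 1$, they coincide; combined with the monotonicity step this pins down the sorted sequence $\eta_r\le\cdots\le\eta_{\la_0-1}$ uniquely as the sorted list of the right-hand multiset. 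The closed form for $\sum_{k=r}^{\la_0-1}\eta_k$ then drops out by summing over that multiset and applying \eqref{properties_floor_ceil} to each inner sum $\sum_{s=1}^{\la_i-1}\floor*{sm/\la_i}$.

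The main obstacle I anticipate is simply identifying the right threshold function and confirming that on the relevant range the bounds $r\le\sum_i a_i(t)\le\la_0$ hold automatically, so that each level set of $\eta$ really is an interval of the form $\{\sum_i a_i(t),\dots,\la_0-1\}$; once this is in place, the counting on both sides aligns without any case analysis.
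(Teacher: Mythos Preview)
Your argument is correct and takes a genuinely different route from the paper's proof. The paper proceeds by first observing $S_1^*\subseteq S_2^*$ and then, for each run $\eta_{k}=\cdots=\eta_{k+n-1}$ of length $n$, constructing by hand $n$ \emph{distinct} indices $j_1,\dots,j_n$ with $\eta_k=\lfloor t_{j_\ell}m/\la_{j_\ell}\rfloor$; this involves introducing auxiliary sets $\Gamma_i$, locating the minimal witnesses $t_i$, and arguing inductively that each further equality in the run forces a new index to realize $\eta_k$. Your approach bypasses all of this by recasting $\eta_k\ge t$ as the single linear inequality $k\ge\sum_i\lceil t\la_i/m\rceil$ and comparing upper level counts, which makes the multiset equality an immediate consequence of a distribution-function identity.

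What your approach buys is economy: no tracking of optimizers, no case analysis on runs, and the bounds $r\le\sum_i a_i(t)\le\la_0$ for $1\le t\le m$ drop out from $1\le a_i(t)\le\la_i$. What the paper's approach buys is slightly more: it identifies, for each value $\eta_k$, specific indices $j$ achieving it, and this finer information is reused in Remark~\ref{remark_imp} when pairing $\eta_s$ with $\eta_{r+\la_0-1-s}$. Your proof still yields the multiset equality \eqref{multiseteq}, from which that remark can be recovered, but the paper's proof makes the pairing of indices more explicit. Both conclude the summation formula in the same way, via \eqref{properties_floor_ceil}.
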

\begin{proof}
First of all, note that, from the definition of $\eta_k$, we have that $\eta_k<m$ for each $k$. Furthermore, if $\eta_k=\rho_{u_1, \dots, u_r}=\Big\lfloor\frac{u_jm}{\la_j}\Big\rfloor$ for some $j$, where $\sum_{i=1}^{r}u_i=k$ and $r\leq k\leq \la_0-2$, then $u_j< \la_j$ and
\begin{equation*}
\eta_k=\rho_{u_1, \dots, u_r}\leq \rho_{u_1, \dots, u_j+1, \dots, u_r}\leq \eta_{k+1}.
\end{equation*}
This proves that $\eta_r\leq \eta_{r+1}\leq \dots\leq \eta_{\la_0-1}<m$ is a non-decreasing sequence. Let $S_1:=\{\!\!\{\eta_k: r\leq k\leq \la_0-1\}\!\!\}$ and $S_2:=\{\!\!\{\floor*{s_im/\la_i}: 1\leq s_i<\la_i, \, 1\leq i\leq r\}\!\!\}$. Now we are going to prove that $S_1=S_2$. From the definition of $\eta_k$, we have that $S_1^* \subseteq S_2^*$.
Furthermore, since the multisets $S_1$ and $S_2$ have the same cardinality, to prove that $S_1=S_2$ it is sufficient to show that $m_{S_1}(\eta_k)\leq m_{S_2}(\eta_k)$ for each $k$, that is, if $m_{S_1}(\eta_k)=n\geq 1$ then there exist distinct elements $j_1, j_2, \dots, j_n\in \{1, \dots, r\}$  and elements $s_{j_1}, s_{j_2}, \dots, s_{j_n}$ with $1\leq s_{j_i}\leq \la_{j_i}-1$ such that  
$$
\eta_k=\floor*{\frac{s_{j_1}m}{\la_{j_1}}}=\cdots =\floor*{\frac{s_{j_n}m}{\la_{j_n}}}.
$$
If $n=1$, there is nothing to prove, so we can assume that $n>1$. Without loss of generality, suppose that
\begin{equation}\label{aux_equa}
\eta_{k-1}<\eta_k=\eta_{k+1}=\dots=\eta_{k+n-1},
\end{equation}
where $\eta_{k-1}:=0$ if $k=r$. From the inclusion $S_1^*\subseteq S_2^*$, there exist $j_1\in \{1, \dots, r\}$ and $s_{j_1}\in \{1, \dots, \la_{j_1}-1\}$ such that $\eta_k=\Big\lfloor\frac{s_{j_1}m}{\la_{j_1}}\Big\rfloor$. Now, for each $i\in \{1, \dots, r\}$ we define the set 
$$
\Gamma_i:=\left\{s\in \N: \eta_k \leq \floor*{\frac{sm}{\la_i}}\text{ and } 1\leq s\leq\la_i\right\}.
$$ 
Next, we prove that $\Gamma_i\neq \emptyset$ for each $i$. Since $s_{j_1}< \la_{j_1}$, for $i\neq j_1$ we have that
$$
\floor*{\frac{s_{j_1}\la_i}{\la_{j_1}}}+1\leq \la_i \quad\text{and}\quad
\eta_k=\floor*{\frac{s_{j_1}m}{\la_{j_1}}}=\floor*{\left(\frac{s_{j_1}\la_i}{\la_{j_1}}\right)\frac{m}{\la_i}}\leq \floor*{\left(\floor*{\frac{s_{j_1}\la_i}{\la_{j_1}}}+1\right)\frac{m}{\la_i}},
$$
which implies that $\Big\lfloor\frac{s_{j_1}\la_i}{\la_{j_1}}\Big\rfloor +1\in \Gamma_i$ for $i\neq j_1$ and $s_{j_1}\in \Gamma_{j_1}$. Let $t_i$ be the smallest element of $\Gamma_i$. From definition of the set $\Gamma_{j_1}$, we have that $t_{j_1}\leq s_{j_1}$. If $t_{j_1}<s_{j_1}$ then
$$
1<\frac{m}{\la_{j_1}}\leq \frac{m}{\la_{j_1}}+\floor*{\frac{t_{j_1}m}{\la_{j_1}}}-\eta_k \leq  \frac{m}{\la_{j_1}}+\floor*{\frac{(s_{j_1}-1)m}{\la_{j_1}}}-\floor*{\frac{s_{j_1}m}{\la_{j_1}}}\leq \frac{s_{j_1}m}{\la_{j_1}}-\floor*{\frac{s_{j_1}m}{\la_{j_1}}},
$$
a contradiction, therefore $t_{j_1}=s_{j_1}$. Also, from definition of the sets $\Gamma_i$, we have that
\begin{equation*}
\floor*{\frac{(t_i-1)m}{\la_i}}<\eta_k=\rho_{t_1, \dots, t_r}\text{ for }i=1, \dots, r.
\end{equation*}
Note that $k=\sum_{i=1}^{r}t_i$. In fact, let $k':=\sum_{i=1}^{r}t_i$. By definition of $\eta_{k'}$, we have that $\eta_k=\rho_{t_1, \dots, t_r}\leq \eta_{k'}$, and from (\ref{aux_equa}), we deduce that $k\leq k'$. On the other hand, suppose that $(u_1, \dots, u_r)$ is an $r$-tuple such that $\eta_{k}=\rho_{u_1, \dots, u_r}$, $\sum_{i=1}^{r}u_i=k$, and $1\leq u_i\leq \la_i$. If there exists $j\in \{1, \dots, r\}$ such that $u_j<t_j$, then
$$
\eta_{k}=\rho_{u_1, \dots, u_r}=\min_{1\leq i \leq r}\floor*{\frac{u_im}{\la_i}}\leq \floor*{\frac{u_jm}{\la_j}}\leq \floor*{\frac{(t_j-1)m}{\la_j}}<\eta_k,
$$
a contradiction. Therefore $t_i\leq u_i$ for each $i=1, \dots, r$, and this implies that $k'\leq k$. Thus, we conclude that $k=k'=\sum_{i=1}^{r}t_i$. 

Now, we show that there exist distinct elements $j_2, \dots, j_n\in \{1, \dots, r\}\setminus \{j_1\}$ such that  
$$
\eta_k=\floor*{\frac{t_{j_1}m}{\la_{j_1}}}=\cdots =\floor*{\frac{t_{j_n}m}{\la_{j_n}}}.
$$
Suppose that $\eta_k<\Big\lfloor\frac{t_jm}{\la_j}\Big\rfloor$ for each $j\in \{1, \dots, r\}\setminus \{j_1\}$, then 
$\eta_k<\rho_{t_1, \dots, t_{j_1}+1, \dots, t_r}\leq \eta_{k+1}$ since $\sum_{i=1}^{r}t_i=k$. This is a contradiction to (\ref{aux_equa}). Therefore there exists $j_2\in \{1, \dots, r\}\setminus \{j_1\}$ satisfying
$$
\eta_k=\floor*{\frac{t_{j_1}m}{\la_{j_1}}}=\floor*{\frac{t_{j_2}m}{\la_{j_2}}}\quad \text{and} \quad t_{j_2}<\la_{j_2},
$$
where the strict inequality $t_{j_2}<\la_{j_2}$ follows from the fact that $\eta_k<m$. If $\eta_k<\Big\lfloor\frac{t_jm}{\la_j}\Big\rfloor$ for each $j\in \{1, \dots, r\}\setminus \{j_1, j_2\}$, then $
\eta_k<\rho_{t_1, \dots, t_{j_1}+1, \dots, t_{j_2}+1, \dots, t_r}\leq \eta_{k+2}
$, again a contradiction to (\ref{aux_equa}). Therefore there exists $j_3\in \{1, \dots, r\}\setminus \{j_1, j_2\}$ such that
$$
\eta_k=\floor*{\frac{t_{j_1}m}{\la_{j_1}}}=\floor*{\frac{t_{j_2}m}{\la_{j_2}}}=\floor*{\frac{t_{j_3}m}{\la_{j_3}}} \quad \text{and} \quad t_{j_3}<\la_{j_3}.
$$
By continuing this process, we obtain distinct elements $j_1, j_2, \dots, j_n$ such that
$$
\eta_k=\floor*{\frac{t_{j_1}m}{\la_{j_1}}}=\cdots =\floor*{\frac{t_{j_n}m}{\la_{j_n}}} \text{ and } t_{j_i}<\la_{j_i} \text{ for each }i=1, \dots, n.
$$
Finally, from (\ref{properties_floor_ceil}), we conclude that
\begin{align*}
\sum_{k=r}^{\la_0-1}\eta_k&=\sum_{i=1}^{r}\sum_{s=1}^{\la_i-1}\floor*{\frac{sm}{\la_i}}=\sum_{i=1}^{r}\frac{(m-1)(\la_i-1)-1+(m, \la_i)}{2}\\
&=\frac{(m-1)(\la_0-r)-r+\sum_{i=1}^{r}(m, \la_i)}{2}.
\end{align*}
\end{proof}
\begin{theorem}\label{theorem1}
Let $m \geq 2$ and $r\geq 2$ be  integers such that $p\nmid  m$. Let $\cX$ be the algebraic curve defined by the affine equation
\begin{equation}\label{curveX}
\cX: \quad y^{m}=\prod_{i=1}^{r} (x-\a_i)^{\lambda_i},\quad \la_i\in\N,\quad  \text{and} \quad 1\leq \la_i< m,
\end{equation}
where $\a_1, \dots, \a_r$ are pairwise distinct elements of $K$. Define $\la_0:=\sum_{i=1}^{r}\la_i$ and suppose that $(m, \la_0)=1$. Then the Weierstrass semigroup at the only place at infinity $Q_\infty\in \cP_{K(\cX)}$ is given by the disjoint union 
$$
H(Q_{\infty})=\langle m, \la_0\rangle \cupdot \bigcupdot_{k=r}^{\la_0-1}B_{k},
$$
where $B_{k}=\left\{mk-k'\la_0: k'=1, \dots, \eta_{k}\right\}$ and $\eta_k$ are defined as in Lemma \ref{lemma1}. In particular, 
\begin{equation}\label{generatorset}
H(Q_{\infty})=\langle m, \la_0,  mk-\la_0\eta_k: k=r, \dots, \la_0-1\rangle.
\end{equation}
\end{theorem}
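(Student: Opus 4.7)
The plan is to prove both containments for the claimed disjoint union and then match the number of elements in the complement against $g(\cX)$. First, since $(m,\la_0)=1$ the place $Q_\infty$ is totally ramified in $K(\cX)/K(x)$, so $v_{Q_\infty}(x)=-m$ and the defining equation gives $v_{Q_\infty}(y)=-\la_0$; since $x$ and $y$ are regular off $Q_\infty$ on the affine model, this already shows $\langle m,\la_0\rangle\subseteq H(Q_\infty)$.

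The central construction is then for the sets $B_k$. Fix $k\in\{r,\dots,\la_0-1\}$ and $k'\in\{1,\dots,\eta_k\}$, and use Lemma \ref{lemma1} to select an $r$-tuple $(s_1,\dots,s_r)$ with $\sum s_i=k$, $1\leq s_i\leq\la_i$, and $\min_i\floor*{s_im/\la_i}=\eta_k\geq k'$. I would then consider
$$z \,=\, \frac{\prod_{i=1}^{r}(x-\al_i)^{s_i}}{y^{k'}}.$$
Setting $d_i:=(m,\la_i)$, the standard splitting of a Kummer extension at each of the $d_i$ places $P$ above $\al_i$ gives $v_P(x-\al_i)=m/d_i$ and $v_P(y)=\la_i/d_i$, so $v_P(z)=(s_im-k'\la_i)/d_i\geq 0$ precisely because $k'\leq\floor*{s_im/\la_i}$. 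At places of $K(\cX)$ lying over points other than $\al_1,\dots,\al_r$ and $\infty$, the function $z$ has neither zeros nor poles; at $Q_\infty$ a direct computation gives $v_{Q_\infty}(z)=-mk+k'\la_0$. The strict positivity $mk-k'\la_0>0$ follows by summing $k'\la_i\leq s_im$ over $i$ together with $(m,\la_0)=1$, since equality would force $m\mid k'$ against $1\leq k'<m$. Thus $z$ witnesses $mk-k'\la_0\in H(Q_\infty)$, and $B_k\subseteq H(Q_\infty)$.

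To finish, I would verify disjointness and count. The $B_k$ are pairwise disjoint because $mk_1-k_1'\la_0=mk_2-k_2'\la_0$ with $1\leq k_i'<m$ and $(m,\la_0)=1$ forces $k_1'=k_2'$ and then $k_1=k_2$. Each $B_k$ avoids $\langle m,\la_0\rangle$ by Proposition \ref{prop_pre}, using the presentation $mk-k'\la_0=m\la_0-(\la_0-k)m-k'\la_0$ with both coefficients in $\N$. The complement of $\langle m,\la_0\rangle$ in $\N_0$ has size $(m-1)(\la_0-1)/2$; subtracting $\sum_{k=r}^{\la_0-1}\eta_k$, evaluated in closed form as $((m-1)(\la_0-r)-r+\sum_i(m,\la_i))/2$ by Lemma \ref{lemma1}, collapses to exactly $g(\cX)=((m-1)(r-1)+r-\sum_i(m,\la_i))/2$ by \eqref{genusX}. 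Since $H(Q_\infty)$ has precisely $g(\cX)$ gaps, the established inclusion is an equality. The ``in particular'' generating set \eqref{generatorset} then drops out from $mk-k'\la_0=(mk-\eta_k\la_0)+(\eta_k-k')\la_0$, which expresses every element of $B_k$ using $\la_0$ and the single extra generator $mk-\eta_k\la_0$.

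The main obstacle is the construction step: it relies crucially on the combinatorial content of Lemma \ref{lemma1}, which simultaneously guarantees the existence of a witness $(s_1,\dots,s_r)$ realizing $\eta_k$ under the constraints $1\leq s_i\leq\la_i$ and $\sum s_i=k$, and supplies the summation identity for $\sum \eta_k$ that makes the genus count close exactly. Once Lemma \ref{lemma1} is in hand, the remaining ingredients---valuation arithmetic in the Kummer extension and the two disjointness checks---are standard.
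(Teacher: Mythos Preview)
Your proof is correct and follows essentially the same approach as the paper: construct the functions $\prod_i(x-\al_i)^{s_i}/y^{k'}$ to show $B_k\subseteq H(Q_\infty)$, verify disjointness via Proposition~\ref{prop_pre} and the coprimality $(m,\la_0)=1$, and close the argument by the genus count using the summation identity from Lemma~\ref{lemma1}. The only cosmetic difference is that the paper exhibits the function with $y^{\rho_{s_1,\dots,s_r}}$ in the denominator and implicitly obtains the smaller $k'$ by adding multiples of $\la_0$, whereas you construct the witness for each $k'$ directly.
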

\begin{proof}
Clearly the result holds if $r=\la_0$, therefore we can assume that $r<\la_0$. 
We start by computing some principal divisors in $K(\cX)$. Let $P_{\al_i}\in \cP_{K(x)}$ be the place corresponding to $\al_i\in K$. For $k\in \{r, \dots, \la_0-1\}$, let $s_1, \dots, s_r$ be positive integers such that $1\leq s_i\leq \la_i$ and $\sum_{i=1}^{r}s_i=k$. Then
$$
(x-\a_i)_{K(\cX)}=\frac{m}{(m, \la_i)}\sum_{\substack{Q|P_{\al_i}\\Q\in \cP_{K(\cX)}}}Q-mQ_\infty, \quad (y)_{K(\cX)}=\sum_{i=1}^{r}\frac{\la_i}{(m, \la_i)}\sum_{\substack{Q|P_{\al_i}\\Q\in \cP_{K(\cX)}}}Q-\la_0Q_\infty,
$$
and
$$
\left(\frac{\prod_{i=1}^{r}(x-\al_i)^{s_i}}{y^{\rho_{s_1, \dots, s_r}}}\right)_{K(\cX)}=\sum_{i=1}^{r}\frac{s_im-\la_i\rho_{s_1, \dots, s_r}}{(m, \la_i)}\sum_{\substack{Q|P_{\al_i}\\Q\in \cP_{K(\cX)}}}Q-\left(mk-\la_0\rho_{s_1, \dots, s_r}\right) Q_\infty.
$$
By the definition of $\eta_k$, we have that $0< mk-\la_0\eta_k\in H(Q_{\infty})$ for $r\leq k< \la_0$ and therefore
\begin{equation}\label{union}
\langle m, \la_0\rangle \cup \bigcup_{k=r}^{\la_0-1}B_{k}  \subseteq H(Q_{\infty}).
\end{equation}
Now, we prove that the union given in (\ref{union}) is disjoint. For $k\in \{r, \dots, \la_0-1\}$ and $k'\in \{1, \dots, \eta_k\} $, an element of $B_k$ can be written as
$$mk-k'\la_0=m\la_0-(\la_0-k)m-k'\la_0.
$$
Therefore, from Proposition \ref{prop_pre}, $B_k\cap \langle m, \la_0 \rangle = \emptyset$. On the other hand, we have that $B_{k_1}\cap B_{k_2}=\emptyset$ for $k_1\neq k_2$. In fact, if $mk_1-\la_0k_1'=mk_2-\la_0k_2'$ for $r\leq k_1, k_2<\la_0$, $1\leq k_1'\leq \eta_{k_1}$, and $1\leq k_2'\leq \eta_{k_2}$, then $m(k_1-k_2)=\la_0(k_1'-k_2')$. Since $(m, \la_0)=1$ and $2-\la_0\leq k_1-k_2\leq \la_0-2$, we conclude that $k_1=k_2$. 

Finally, we prove that equality holds in (\ref{union}). Since 
$$
g(\cX)=\frac{(m-1)(r-1)+r-\sum_{i=1}^{r}(m, \la_i)}{2}\quad \text{and}\quad g_{\langle m , \la_0 \rangle}=\frac{(m-1)(\la_0-1)}{2},
$$ 
from Lemma \ref{lemma1} we obtain that
$$
\# \left(\bigcupdot_{k=r}^{\la_0-1}B_{k}\right)=\sum_{k=r}^{\la_0-1}\eta_{k}=\frac{(m-1)(\la_0-r)-r+\sum_{i=1}^{r}(m, \la_i)}{2}=\# \left(H(Q_{\infty})\setminus \langle m, \la_0 \rangle\right) 
$$
and the result follows.
\end{proof}
In general, we have that a minimal system of generators of a numerical semigroup $H$ has cardinality at most the multiplicity of the semigroup, that is, $e_{H}\leq m_{H}$, see \cite[Proposition 2.10]{RG2009}. Since $m\in H(Q_{\infty})$, $e_{H(Q_{\infty})}\leq m_{H(Q_{\infty})}\leq m$. However, in general, it is difficult to obtain a minimal system of generators to $H(Q_{\infty})$ from the system of generators given in (\ref{generatorset}).

For example, for the curve $y^5=x(x-1)^2$
defined over $\fq$ with $5\nmid q$, the system of generators for the semigroup $H(Q_{\infty})$ provided by Theorem \ref{theorem1} is given  by $H(Q_{\infty})=\left\langle 3, 4, 5\right\rangle$ and therefore is a minimal system of generators. However, this does not happen in general. In fact, if $\eta_{k}=\eta_{k+1}$ for some $k$, then we can remove the element $m(k+1)-\la_0\eta_{k+1}$ of the system of generators given in (\ref{generatorset}) since $m(k+1)-\la_0\eta_{k+1}=mk-\la_0\eta_k+m$. More generally, define $\la:=\max_{1\leq i \leq r}\la_i$. If $\la=1$ then $H(Q_{\infty})=\langle m, \la_0\rangle$ and $e_{H(Q_{\infty})}=2$. If $\la>1$, then for $i\in \{\floor*{m/\la}, \dots, m-\ceil*{m/\la}\}$ define $k_i:=0$ if there is no $k\in \{r, \dots, \la_0-1\}$ such that $\eta_k=i$, and $k_i:=\min\{k: r\leq k< \la_0,\, \eta_k=i \}$ otherwise. Thus, for each $i$ such that $k_i\neq 0$ and $k$ such that $\eta_{k}=i$, we can write $mk-\la_0\eta_k=mk_i-\la_0\eta_{k_i}+m(k-k_i)$. Therefore, by removing the element $mk-\la_0\eta_k$ from the system of generators given in (\ref{generatorset}) we obtain that 
$$
H(Q_{\infty})=\left\langle m, \la_0, mk_i-\la_0\eta_{k_i}: i=\floor*{\frac{m}{\la}}, \dots, m-\ceil*{\frac{m}{\la}} \text{ and } k_i\neq 0 \right\rangle
$$
and $e_{H(Q_{\infty})}\leq m-\Big\lceil\frac{m}{\la}\Big\rceil-\Big\lfloor\frac{m}{\la}\Big\rfloor+3\leq m$.
\begin{example}[Plane model of the $GGS$ curve]\label{exam_GSS}
The $GGS$ curve is the first generalization of the $GK$ curve, which is the first example of a maximal curve not covered by the Hermitian curve, see \cite{GGS2010}. The $GGS$ curve is an $\mathbb{F}_{q^{2n}}$-maximal curve for $n\geq 3$ an odd integer, and it is described by the following  plane model: 
$$
y^{q^n+1}=(x^q+x)h(x)^{q+1},\text{ where } h(x)=\sum_{i=0}^{q}(-1)^{i+1}x^{i(q-1)}.
$$
This curve only has one place at infinity $Q_{\infty}$. In order to calculate the Weierstrass semigroup $H(Q_{\infty})$, note that $h(x)$ is a separable polynomial of degree $q(q-1)$. Using our standard notation as in Theorem \ref{theorem1}, we have that $m=q^n+1$, $r=q^2$, $\la_0=q^3$,
$\la_1=\dots =\la_{q}=1$, and $\la_{q+1}=\dots =\la_{q^2}=q+1$. From the characterization of the multiset $S=\{\!\!\{\eta_k: r\leq k\leq \la_0-1 \}\!\!\}$ given in Lemma \ref{lemma1}, we have that 
$$
S^*=\left\{\frac{(\be+1)(q^n+1)}{q+1}:0\leq \be\leq q-1\right\}.
$$
Furthermore, since $\la_1=\dots =\la_{q}=1$ and $\la_{q+1}=\dots =\la_{q^2}=q+1$, we have $m_S(a)=q^2-q$ for each $a\in S^*$. Thus, since $\eta_r\leq \eta_{r+1}\leq \dots\leq \eta_{\la_0-1}$ is a non-decreasing sequence, we obtain that
$$ 
\begin{matrix}
\eta_{r}&=&\eta_{r+1}&=&\dots & =&\eta_{r+q^2-q-1}&=&\frac{q^n+1}{q+1}\\
\eta_{r+q^2-q}&=&\eta_{r+q^2-q+1}&=&\dots  &=&\eta_{r+2(q^2-q)-1}&=&\frac{2(q^n+1)}{q+1}\\
&&&&\vdots\\
\eta_{r+\beta (q^2-q)}&=&\eta_{r+\beta (q^2-q)+1}&=&\dots  &=&\eta_{r+(\beta +1)(q^2-q)-1}&=&\frac{(\beta +1)(q^n+1)}{q+1}\\
&&&&\vdots\\
\eta_{r+(q-1) (q^2-q)}&=&\eta_{r+(q-1) (q^2-q)+1}&=&\dots  &=&\eta_{r+q(q^2-q)-1}&=&\frac{q(q^n+1)}{q+1}.
\end{matrix}
$$
Therefore, 
$$
\eta_{r+\be(q^2-q)+i}=\frac{(\be+1)(q^n+1)}{q+1}\,\text{ for }0\leq \be \leq q-1\text{ and }0\leq i\leq q^2-q-1.
$$
Moreover, since
$$
m(r+\be (q^2-q))-\la_0\eta_{r+\be (q^2-q)}=(q-\be)\frac{q(q^n+1)}{q+1}\, \text{ for }0\leq \be \leq q-1,
$$
it follows from Theorem \ref{theorem1} that
$$
H(Q_{\infty})=\left\langle q^n+1, q^3, \frac{q(q^n+1)}{q+1} \right\rangle.
$$
As expected, this description of $H(Q_{\infty})$ matches the result given in \cite[Corollary 3.5]{GCS2013}.
\end{example}
Let $n\geq 3$ be an odd integer, $m$ be a divisor of $q^n+1$, and $d$ be a divisor of $q+1$ such that $(m, d(q-1))=1$. In \cite[Theorem 3.1]{MQ2022}, the authors study the $\mathbb{F}_{q^{2n}}$-maximal curve defined by the affine equation
$$
\cY_{d, m}: \quad y^{m}=x^d(x^d-1)\left(\frac{x^{d(q-1)}-1}{x^d-1}\right)^{q+1}.
$$
This curve is a subcover of the second generalization of the $GK$ curve given by Beelen and Montanucci \cite{BM2018} and has only one place at infinity $Q_{\infty}$. In the following result, using Theorem \ref{theorem1}, we compute the Weierstrass semigroup $H(Q_{\infty})$.
\begin{proposition}\label{BMsubcover}
Let $n\geq 3$ be an odd integer, $m$ be a divisor of $q^n+1$, and $d$ be a divisor of $q+1$ such that $(m, d(q-1))=1$. Consider the curve 
$$
\cY_{d, m}: \quad y^{m}=x^d(x^d-1)\left(\frac{x^{d(q-1)}-1}{x^d-1}\right)^{q+1}.
$$
Then the Weierstrass semigroup at the only place at infinity $Q_{\infty}$ is given by
$$
H(Q_{\infty})=\left\langle m, \la_0, mk_{\be}-\la_0\floor*{\frac{(\be +1)m}{q+1}}: \be=0, \dots, q-1\right\rangle, 
$$
where $\la_0=dq(q-1)$ and $k_{\be}=d(q-1)(\be+1)+1+\floor*{\frac{\be d}{q+1}}-\be d$.
\end{proposition}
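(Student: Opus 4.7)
The plan is to apply Theorem \ref{theorem1} to the curve $\cY_{d,m}$, extract the raw generating set, and then use the redundancy-reduction argument following Theorem \ref{theorem1} to arrive at the presentation in the statement.

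First I would read off the factorization of the right-hand side $f(x):=x^d(x^d-1)\left(\frac{x^{d(q-1)}-1}{x^d-1}\right)^{q+1}$ into linear factors. Since $d\mid q+1$, the quotient $\frac{x^{d(q-1)}-1}{x^d-1}$ is a separable polynomial of degree $d(q-2)$ whose roots are precisely the $d(q-1)$-th roots of unity that are not $d$-th roots of unity. Hence $f$ has $r=1+d+d(q-2)=1+d(q-1)$ distinct roots, and the exponents split as one $\la_i=d$ (at $x=0$), $d$ exponents equal to $1$, and $d(q-2)$ exponents equal to $q+1$, giving $\la_0=dq(q-1)$. The hypotheses of Theorem \ref{theorem1} hold: $p\nmid m$ because $m\mid q^n+1$, and $(m,\la_0)=1$ since $(m,d(q-1))=1$ by assumption and $(m,q)=1$ because $m\mid q^n+1$.

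Second, I would compute the multiset $\{\!\!\{\eta_k:r\leq k\leq \la_0-1\}\!\!\}$ via Lemma \ref{lemma1}. The three families of factors contribute (i) one copy of each of the values $\floor*{sm/d}$ for $1\leq s\leq d-1$, (ii) nothing from the $\la_i=1$ factors, and (iii) $d(q-2)$ copies of each value $\floor*{sm/(q+1)}$ for $1\leq s\leq q$. The crucial observation is that since $d\mid q+1$, the identity $sm/d=\frac{s(q+1)}{d}\cdot\frac{m}{q+1}$ shows that the values in (i) are already contained in the family (iii), namely at the indices $s\in\{(q+1)/d,2(q+1)/d,\dots,(d-1)(q+1)/d\}$. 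Consequently the distinct values of $\eta_k$ lie in $\{\floor*{sm/(q+1)}:1\leq s\leq q\}$, and the multiplicity of $\floor*{sm/(q+1)}$ in $\{\!\!\{\eta_k\}\!\!\}$ is $d(q-2)+1$ when $s$ is a multiple of $(q+1)/d$ with $1\leq s\leq (d-1)(q+1)/d$, and $d(q-2)$ otherwise.

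Third, because Lemma \ref{lemma1} also guarantees that the sequence $\eta_r\leq\eta_{r+1}\leq\cdots\leq \eta_{\la_0-1}$ is non-decreasing, the smallest index $k_\be$ with $\eta_{k_\be}=\floor*{(\be+1)m/(q+1)}$ is $k_\be=r+\sum_{s=1}^{\be}m_s$, where $m_s$ is the multiplicity obtained above. A direct count yields $\sum_{s=1}^{\be}m_s=\be d(q-2)+\#\{t\in\{1,\dots,d-1\}:t(q+1)/d\leq\be\}$, and the hypothesis $\be\leq q-1$ forces $\be d/(q+1)\leq d-1$, so this cardinality collapses to $\floor*{\be d/(q+1)}$. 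Adding $r=1+d(q-1)$ and re-grouping recovers exactly $k_\be=d(q-1)(\be+1)+1+\floor*{\be d/(q+1)}-\be d$. Finally, invoking the reduction argument in the paragraph after Theorem \ref{theorem1} to discard every generator $mk-\la_0\eta_k$ that differs from one with a smaller $k$ by a multiple of $m$ leaves precisely the generating set displayed in the statement.

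The step I expect to be the trickiest is the coincidence argument in the second paragraph: tracking how the single $\la_i=d$ contribution is folded into the much larger $\la_i=q+1$ pool via $d\mid q+1$, and the ensuing careful multiplicity bookkeeping needed to extract the closed-form expression for $k_\be$.
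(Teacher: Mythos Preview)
Your proposal is correct and follows essentially the same route as the paper: identify the exponent profile $(\la_1;\la_2,\dots,\la_{d+1};\la_{d+2},\dots,\la_{d(q-1)+1})=(d;1,\dots,1;q+1,\dots,q+1)$, feed it into Lemma~\ref{lemma1} and Theorem~\ref{theorem1}, observe that the $\la_i=d$ contributions fold into the $\la_i=q+1$ family because $d\mid q+1$, count multiplicities in the multiset $\{\!\!\{\eta_k\}\!\!\}$, and read off the first index $k_\be$ in each constant block. The only cosmetic difference is bookkeeping: the paper encodes the extra multiplicity coming from $\la_1=d$ through the indicator $\delta_\be=\lceil(\be+1)d/(q+1)\rceil-\lfloor(\be+1)d/(q+1)\rfloor$ and then invokes the identity $\be-\sum_{j=0}^{\be-1}\delta_j=\lfloor \be d/(q+1)\rfloor$, whereas you count the multiples of $(q+1)/d$ in $\{1,\dots,\be\}$ directly; both computations land on the same closed form for $k_\be$.
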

\begin{proof}
Using our standard notation, we have that $r=d(q-1)+1$, $\la_0=dq(q-1)$, $\la_1=d$, $\la_2=\dots=\la_{d+1}=1$, and $\la_{d+2}=\dots=\la_{d(q-1)+1}=q+1$. From the characterization of $S=\{\!\!\{\eta_k: r\leq k\leq \la_0-1 \}\!\!\}$ given in Lemma \ref{lemma1}, we obtain that 
$$
S^*=\left\{\floor*{\frac{(\be+1)m}{q+1}}:0\leq \be\leq q-1\right\}.
$$
Now, define $\delta_{\be}:=\ceil*{\frac{(\be+1)d}{q+1}}-\floor*{\frac{(\be+1)d}{q+1}}$ for $1\leq \be\leq q-1$. Since $\la_1=d$, $\la_2=\dots=\la_{d+1}=1$, and $\la_{d+2}=\dots=\la_{d(q-1)+1}=q+1$, we have
$$
m_S\left(\floor*{\frac{(\be+1)m}{q+1}}\right)=\left\{\begin{array}{ll}
d(q-2), & \text{if } \delta_{\be}=1,\\
d(q-2)+1, & \text{if } \delta_{\be}=0,
\end{array}\right.
$$
or, equivalently, 
\begin{equation}\label{multiex}
m_S\left(\floor*{\frac{(\be+1)m}{q+1}}\right)=d(q-2)+1-\delta_{\be}.
\end{equation}
In order to calculate the semigroup $H(Q_{\infty})$, let $k_{\be, i}:=r+\be (d(q-2)+1)-\sum_{j=0}^{\be-1}\delta_j+i$
for $0\leq \be \leq q-1$ and $0\leq i \leq d(q-2)-\delta_{\be}$. From (\ref{multiex}) and since $\eta_r\leq \eta_{r-1}\leq \dots \leq \eta_{\la_0-1}$ is a non-decreasing sequence, we obtain that
$$ 
\begin{matrix}
\eta_{r}&=& \eta_{r+1}&=&\dots & =&\eta_{r+d(q-2)-\delta_0}&=&\floor*{\frac{m}{q+1}}\\
\eta_{r+d(q-2)+1-\delta_0}&= &\eta_{r+d(q-2)+2-\delta_0}&=&\dots  &=&\eta_{r+2(d(q-2)+1)-1-\delta_0-\delta_1}&=&\floor*{\frac{2m}{q+1}}\\
&&&&\vdots\\
\eta_{k_{\beta, 0}}&=&\eta_{k_{\beta, 1}}&=&\dots  &=&\eta_{k_{\beta, d(q-2)-\delta_{\be}}}&=&\floor*{\frac{(\beta +1)m}{q+1}}\\
&&&&\vdots\\
\eta_{k_{q-1, 0}}&=&\eta_{k_{q-1, 1}}&=&\dots  &=&\eta_{k_{q-1, d(q-2)-\delta_{q-1}}}&=&\floor*{\frac{qm}{q+1}}.
\end{matrix}
$$
Therefore $\eta_{k_{\be, i}}=\floor*{\frac{(\be +1)m}{q+1}}$ for $0\leq \beta \leq q-1$ and $0\leq i\leq d(q-2)-\delta_{\be}$. From Theorem \ref{theorem1}, we conclude that
$$
H(Q_{\infty})=\left\langle m, \la_0, mk_{\be, 0}-\la_0\floor*{\frac{(\be +1)m}{q+1}}: \be=0, \dots, q-1\right\rangle.
$$
Now the proposition follows from the fact that $\be-\sum_{j=0}^{\be-1}\delta_j=\floor*{\frac{\be d}{q+1}}$ for $0\leq \be \leq q-1$.
\end{proof}

\section{The Frobenius number $F_{H(Q_{\infty})}$ and the Multiplicity $m_{H(Q_{\infty})}$}

With the explicit description of the Weierstrass semigroup $H(Q_{\infty})$ given in Theorem \ref{theorem1}, in this section we study the Frobenius number $F_{H(Q_{\infty})}$, the multiplicity $m_{H(Q_{\infty})}$, and the relationship between them. 

Henceforth, to simplify the notation, we define 
\begin{equation}\label{eta_epsilon}
\eta_s:=\left\{\begin{array}{ll}
0,& \mbox{if }0\leq s<r,\\
m-1, & \mbox{if }\la_0\leq s,
\end{array} \right . \quad \text{and}\quad 
\epsilon_k:=mk-\la_0(\eta_k+1) \text{ for }k\in \N_0.
\end{equation}
Thus, from Theorem \ref{theorem1}, we obtain that
\begin{equation}\label{Hinf}
H(Q_{\infty})=\langle \epsilon_k+\la_0: k=1, r, \dots, \la_0\rangle.
\end{equation}

We start by noticing that not all the elements $\epsilon_{r-1}, \dots, \epsilon_{\la_0-1}$ defined in (\ref{eta_epsilon}) are necessarily positive, however the following result states that the largest of them is equal to the Frobenius number $F_{H(Q_{\infty})}$. Moreover, we explicitly describe the gap set $G(Q_{\infty})$.
\begin{proposition}\label{prop_1} 
Using the same notation as in Theorem \ref{theorem1}, we have that
$$
F_{H(Q_{\infty})}=\max\{\epsilon_{r-1}, \dots, \epsilon_{\la_0-1}\}
$$
and 
$$
G(Q_{\infty})=\left\{ma-b\la_0: 1\leq a \leq \la_0-1, \, \eta_{a}+1\leq b\leq \floor*{\frac{am}{\la_0}}\right\}.
$$
\end{proposition}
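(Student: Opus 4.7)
The plan is to derive the description of $G(Q_\infty)$ first and then read off the Frobenius number as its maximum. By Theorem~\ref{theorem1} we have the disjoint decomposition
$$
H(Q_\infty)=\langle m,\la_0\rangle\;\cupdot\;\bigcupdot_{k=r}^{\la_0-1}B_k,
$$
so $G(Q_\infty)=(\N\setminus\langle m,\la_0\rangle)\setminus\bigcupdot_{k=r}^{\la_0-1}B_k$. Since $(m,\la_0)=1$, every positive integer $n$ admits a unique representation $n=am+b\la_0$ with $0\le a\le\la_0-1$ and $b\in\Z$, and $n\in\langle m,\la_0\rangle$ precisely when $b\ge 0$. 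Hence the first step is to conclude that the gaps of $\langle m,\la_0\rangle$ are exactly the integers $am-b\la_0>0$ with $1\le a\le\la_0-1$ and $1\le b\le\floor*{am/\la_0}$ (the upper bound coming from positivity).

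Next I would remove the elements of $\bigcupdot_{k=r}^{\la_0-1}B_k=\{am-b\la_0:r\le a\le\la_0-1,\,1\le b\le\eta_a\}$ from this description. Using the convention $\eta_a=0$ for $1\le a<r$ from~(\ref{eta_epsilon}), this subtraction amounts to replacing the uniform lower bound $b\ge 1$ by $b\ge \eta_a+1$ for every $1\le a\le\la_0-1$, which yields exactly the claimed formula for $G(Q_\infty)$.

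For the Frobenius number, for each fixed $a\in\{1,\dots,\la_0-1\}$ the expression $am-b\la_0$ over the admissible range of $b$ is maximized by $b=\eta_a+1$, giving $\epsilon_a$. This value is a genuine gap precisely when $\eta_a+1\le\floor*{am/\la_0}$, which is equivalent to $\epsilon_a\ge 0$; moreover in that case $\epsilon_a>0$, since $(m,\la_0)=1$ together with $1\le a\le\la_0-1$ prevents $\la_0\mid \epsilon_a$. Assuming $g(\cX)>0$ (so that at least one such positive $\epsilon_a$ exists), we deduce
$$
F_{H(Q_\infty)}=\max\{\epsilon_a:1\le a\le\la_0-1\}.
$$
To finish, I would observe that for $1\le a\le r-1$ one has $\eta_a=0$, so $\epsilon_a=am-\la_0$ is strictly increasing in $a$ and hence bounded above by $\epsilon_{r-1}$; this allows truncating the maximum to $a\in\{r-1,\dots,\la_0-1\}$, yielding the stated formula. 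The only delicate point is the bookkeeping around the convention $\eta_a=0$ for $a<r$, which absorbs the ``untouched'' gaps of $\langle m,\la_0\rangle$ into one uniform formula for $G(Q_\infty)$; there is no substantial obstacle beyond this.
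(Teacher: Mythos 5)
Your proposal is correct and follows essentially the same route as the paper: both start from the disjoint decomposition of Theorem \ref{theorem1}, describe the gaps of $\langle m,\la_0\rangle$ via the coprime representation $ma-b\la_0$ (the paper invokes Proposition \ref{prop_pre}, you use the equivalent unique-representation argument), delete the sets $B_k$ by raising the lower bound on $b$ to $\eta_a+1$, and obtain the Frobenius number by maximizing over $b$ first and then over $a$. Your added care about when $\epsilon_a$ is actually a positive gap, and the truncation of the maximum to $a\ge r-1$ using the monotonicity of $\epsilon_a$ for $a<r$, are sound and match the paper's corner-point argument on the region $\Delta$.
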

\begin{proof}
From Theorem \ref{theorem1}, we have that
\begin{equation*}\label{eq_prop}
G(Q_{\infty})=\N\setminus \left(\langle m, \la_0\rangle \cupdot \bigcupdot_{k=r}^{\la_0-1}B_{k}\right)=\left(\N\setminus \langle m, \la_0\rangle\right)\setminus \left(\bigcupdot_{k=r}^{\la_0-1}B_{k}\right),
\end{equation*}
where $B_{k}=\{m\la_0-(\la_0-k)m-k'\la_0: k'=1, \dots,\eta_{k}\}$. Moreover, from Proposition \ref{prop_pre}, we know that the elements of $\N\setminus \langle m, \la_0\rangle$ are of the form $m\la_0-am-b\la_0$, where $a$ and $b$ are positive integers. Therefore,
$$
G(Q_{\infty})=\left\{m\la_0-am-b\la_0:  (a, b) \in \Delta \right\} \cap \N,
$$
where $\Delta= \{(a, b)\in \N^2: \eta_{\la_0-a}+1 \leq b\}$, and
$$F_{H(Q_{\infty})}=\max_{(a, b)\in \Delta} \{m\la_0-am-b\la_0\}.
$$  
By the definition of the set $\Delta$, $\max_{(a, b)\in \Delta}\{m\la_0-am-b\la_0\}$ is attained at a point in $\Delta$ of the form $(k, \eta_{\la_0-k}+1)$ for some $k\in \{1, \dots, \la_0-r+1\}$, see Figure \ref{figure1}. Thus, $F_{H(Q_{\infty})}=\max\{\epsilon_{r-1}, \dots, \epsilon_{\la_0-1}\}$. Moreover,
\begin{align*}
G(Q_{\infty})&=\left\{m\la_0-am-b\la_0:  (a, b) \in \Delta \right\} \cap \N\\
&=\left\{m(\la_0-a)-b\la_0:  1\leq a \leq \la_0-1, \, \eta_{\la_0-a}+1\leq b\right\} \cap \N\\
&=\left\{ma-b\la_0:  1\leq a \leq \la_0-1, \, \eta_{a}+1\leq b\leq \floor*{\frac{am}{\la_0}}\right\}.
\end{align*}
\end{proof}
\begin{figure}[h!]
\begin{tikzpicture}[line cap=round,line join=round,>=triangle 45,x=0.38cm,y=0.3cm]
\clip(-6.,0.) rectangle (34.,28.);
\draw [->,line width=0.4pt] (4.,3.) -- (4.,26.);
\draw [->,line width=0.4pt] (4.,3.) -- (30.,3.);
\draw (5.6,2.07) node[anchor=north west] {${\scriptscriptstyle 1}$};
\draw (7.4,2.07) node[anchor=north west] {${\scriptscriptstyle 2}$};
\draw (9.4,2.07) node[anchor=north west] {${\scriptscriptstyle 3}$};
\draw (24.7,2.07) node[anchor=north west] {${\scriptscriptstyle \lambda_0-r}$};
\draw (27.3,2.07) node[anchor=north west] {${\scriptscriptstyle \lambda_0-r+1}$};
\draw (-0.1,22.73602307339717) node[anchor=north west] {${\scriptscriptstyle \eta_{\lambda_0-1}+1}$};
\draw (0.1,4.633932744380933) node[anchor=north west] {${\scriptscriptstyle \eta_{r-1}+1}$};
\draw (-0.1,18.92191163498439) node[anchor=north west] {${\scriptscriptstyle \eta_{\lambda_0-2}+1}$};
\draw (-0.1,17.92191163498439) node[anchor=north west] {${\scriptscriptstyle \eta_{\lambda_0-3}+1}$};
\draw (0.9,8.016646593742554) node[anchor=north west] {${\scriptscriptstyle \eta_r+1}$};
\draw (1.54006831497784739,14.26796178323543) node[anchor=north west] {$\vdots$};
\draw (17.322239972734423,2.07) node[anchor=north west] {$\cdots $};
\draw [line width=0.05pt,dash pattern=on 3pt off 3pt] (4.,22.)-- (6.,22.);
\draw [line width=0.05pt,dash pattern=on 3pt off 3pt] (6.,22.)-- (5.949936444368475,3.);
\draw [line width=0.05pt,dash pattern=on 3pt off 3pt] (4.,18.)-- (8.,18.);
\draw [line width=0.05pt,dash pattern=on 3pt off 3pt] (4.,12.)-- (18.,12.);
\draw [line width=0.05pt,dash pattern=on 3pt off 3pt] (4.,4.)-- (28.,4.);
\draw [line width=0.05pt,dash pattern=on 3pt off 3pt] (8.,18.)-- (7.962385183135641,3.);
\draw [line width=0.05pt,dash pattern=on 3pt off 3pt] (10.,17.)-- (10.044575026336888,3.);
\draw [line width=0.05pt,dash pattern=on 3pt off 3pt] (14.,15.)-- (14.048786263262366,3.);
\draw [line width=0.05pt,dash pattern=on 3pt off 3pt] (18.,12.)-- (17.999608017028837,3.);
\draw [line width=0.05pt,dash pattern=on 3pt off 3pt] (26.,7.)-- (25.98133574930029,3.);
\draw [line width=0.05pt,dash pattern=on 3pt off 3pt] (28.,4.)-- (28.010136109342533,3.);
\draw [line width=0.05pt,dash pattern=on 3pt off 3pt] (4.,16.987199668999594)-- (10.,17.);
\draw [line width=0.05pt,dash pattern=on 3pt off 3pt] (4.,15.065178275275345)-- (14.,15.);
\draw [line width=0.05pt,dash pattern=on 3pt off 3pt] (4.,7.056755801424279)-- (26.,7.);
\begin{scriptsize}
\draw [fill=black] (28.,11.) circle (0.5pt);
\draw [fill=black] (28.,10.) circle (0.5pt);
\draw [fill=black] (28.,18.) circle (0.5pt);
\draw [fill=black] (28.,22.) circle (0.5pt);
\draw [fill=black] (28.,19.) circle (0.5pt);
\draw [fill=black] (28.,17.) circle (0.5pt);
\draw [fill=black] (28.,4.) circle (0.5pt);
\draw [fill=black] (28.,23.) circle (0.5pt);
\draw [fill=black] (28.,9.) circle (0.5pt);
\draw [fill=black] (28.,21.) circle (0.5pt);
\draw [fill=black] (28.,7.) circle (0.5pt);
\draw [fill=black] (28.,6.) circle (0.5pt);
\draw [fill=black] (28.,12.) circle (0.5pt);
\draw [fill=black] (28.,25.) circle (0.5pt);
\draw [fill=black] (28.,16.) circle (0.5pt);
\draw [fill=black] (28.,15.) circle (0.5pt);
\draw [fill=black] (28.,14.) circle (0.5pt);
\draw [fill=black] (28.,5.) circle (0.5pt);
\draw [fill=black] (28.,20.) circle (0.5pt);
\draw [fill=black] (28.,8.) circle (0.5pt);
\draw [fill=black] (28.,13.) circle (0.5pt);
\draw [fill=black] (28.,24.) circle (0.5pt);
\draw [fill=black] (26.,11.) circle (0.5pt);
\draw [fill=black] (26.,10.) circle (0.5pt);
\draw [fill=black] (26.,13.) circle (0.5pt);
\draw [fill=black] (26.,16.) circle (0.5pt);
\draw [fill=black] (26.,8.) circle (0.5pt);
\draw [fill=black] (26.,21.) circle (0.5pt);
\draw [fill=black] (26.,24.) circle (0.5pt);
\draw [fill=black] (26.,15.) circle (0.5pt);
\draw [fill=black] (26.,22.) circle (0.5pt);
\draw [fill=black] (26.,14.) circle (0.5pt);
\draw [fill=black] (26.,23.) circle (0.5pt);
\draw [fill=black] (26.,17.) circle (0.5pt);
\draw [fill=black] (26.,12.) circle (0.5pt);
\draw [fill=black] (26.,20.) circle (0.5pt);
\draw [fill=black] (26.,9.) circle (0.5pt);
\draw [fill=black] (26.,7.) circle (0.5pt);
\draw [fill=black] (26.,18.) circle (0.5pt);
\draw [fill=black] (26.,19.) circle (0.5pt);
\draw [fill=black] (26.,25.) circle (0.5pt);
\draw [fill=black] (8.,23.) circle (0.5pt);
\draw [fill=black] (8.,24.) circle (0.5pt);
\draw [fill=black] (8.,19.) circle (0.5pt);
\draw [fill=black] (8.,20.) circle (0.5pt);
\draw [fill=black] (8.,18.) circle (0.5pt);
\draw [fill=black] (8.,21.) circle (0.5pt);
\draw [fill=black] (8.,22.) circle (0.5pt);
\draw [fill=black] (8.,25.) circle (0.5pt);
\draw [fill=black] (6.,23.) circle (0.5pt);
\draw [fill=black] (6.,22.) circle (0.5pt);
\draw [fill=black] (6.,24.) circle (0.5pt);
\draw [fill=black] (6.,25.) circle (0.5pt);
\draw [fill=black] (24.,17.) circle (0.5pt);
\draw [fill=black] (24.,12.) circle (0.5pt);
\draw [fill=black] (24.,20.) circle (0.5pt);
\draw [fill=black] (24.,21.) circle (0.5pt);
\draw [fill=black] (24.,25.) circle (0.5pt);
\draw [fill=black] (24.,19.) circle (0.5pt);
\draw [fill=black] (24.,22.) circle (0.5pt);
\draw [fill=black] (24.,24.) circle (0.5pt);
\draw [fill=black] (24.,15.) circle (0.5pt);
\draw [fill=black] (24.,14.) circle (0.5pt);
\draw [fill=black] (24.,18.) circle (0.5pt);
\draw [fill=black] (24.,16.) circle (0.5pt);
\draw [fill=black] (24.,13.) circle (0.5pt);
\draw [fill=black] (24.,23.) circle (0.5pt);
\draw [fill=black] (10.,21.) circle (0.5pt);
\draw [fill=black] (10.,23.) circle (0.5pt);
\draw [fill=black] (10.,25.) circle (0.5pt);
\draw [fill=black] (10.,24.) circle (0.5pt);
\draw [fill=black] (10.,18.) circle (0.5pt);
\draw [fill=black] (10.,22.) circle (0.5pt);
\draw [fill=black] (10.,20.) circle (0.5pt);
\draw [fill=black] (10.,17.) circle (0.5pt);
\draw [fill=black] (10.,19.) circle (0.5pt);
\draw [fill=black] (16.,25.) circle (0.5pt);
\draw [fill=black] (12.,20.) circle (0.5pt);
\draw [fill=black] (12.,19.) circle (0.5pt);
\draw [fill=black] (14.,24.) circle (0.5pt);
\draw [fill=black] (14.,21.) circle (0.5pt);
\draw [fill=black] (16.,24.) circle (0.5pt);
\draw [fill=black] (16.,23.) circle (0.5pt);
\draw [fill=black] (20.,22.) circle (0.5pt);
\draw [fill=black] (16.,17.) circle (0.5pt);
\draw [fill=black] (20.,24.) circle (0.5pt);
\draw [fill=black] (18.,20.) circle (0.5pt);
\draw [fill=black] (22.,25.) circle (0.5pt);
\draw [fill=black] (14.,22.) circle (0.5pt);
\draw [fill=black] (14.,23.) circle (0.5pt);
\draw [fill=black] (18.,21.) circle (0.5pt);
\draw [fill=black] (18.,18.) circle (0.5pt);
\draw [fill=black] (20.,21.) circle (0.5pt);
\draw [fill=black] (22.,23.) circle (0.5pt);
\draw [fill=black] (22.,19.) circle (0.5pt);
\draw [fill=black] (20.,18.) circle (0.5pt);
\draw [fill=black] (20.,23.) circle (0.5pt);
\draw [fill=black] (16.,18.) circle (0.5pt);
\draw [fill=black] (20.,17.) circle (0.5pt);
\draw [fill=black] (22.,20.) circle (0.5pt);
\draw [fill=black] (18.,23.) circle (0.5pt);
\draw [fill=black] (22.,18.) circle (0.5pt);
\draw [fill=black] (22.,22.) circle (0.5pt);
\draw [fill=black] (16.,21.) circle (0.5pt);
\draw [fill=black] (20.,19.) circle (0.5pt);
\draw [fill=black] (14.,25.) circle (0.5pt);
\draw [fill=black] (16.,20.) circle (0.5pt);
\draw [fill=black] (14.,20.) circle (0.5pt);
\draw [fill=black] (12.,25.) circle (0.5pt);
\draw [fill=black] (18.,25.) circle (0.5pt);
\draw [fill=black] (18.,24.) circle (0.5pt);
\draw [fill=black] (12.,22.) circle (0.5pt);
\draw [fill=black] (18.,17.) circle (0.5pt);
\draw [fill=black] (12.,21.) circle (0.5pt);
\draw [fill=black] (20.,20.) circle (0.5pt);
\draw [fill=black] (16.,22.) circle (0.5pt);
\draw [fill=black] (14.,17.) circle (0.5pt);
\draw [fill=black] (12.,23.) circle (0.5pt);
\draw [fill=black] (22.,21.) circle (0.5pt);
\draw [fill=black] (18.,19.) circle (0.5pt);
\draw [fill=black] (12.,18.) circle (0.5pt);
\draw [fill=black] (22.,24.) circle (0.5pt);
\draw [fill=black] (14.,18.) circle (0.5pt);
\draw [fill=black] (16.,19.) circle (0.5pt);
\draw [fill=black] (14.,19.) circle (0.5pt);
\draw [fill=black] (22.,17.) circle (0.5pt);
\draw [fill=black] (12.,17.) circle (0.5pt);
\draw [fill=black] (18.,22.) circle (0.5pt);
\draw [fill=black] (12.,24.) circle (0.5pt);
\draw [fill=black] (20.,25.) circle (0.5pt);
\draw [fill=black] (14.,15.) circle (0.5pt);
\draw [fill=black] (16.,15.) circle (0.5pt);
\draw [fill=black] (22.,16.) circle (0.5pt);
\draw [fill=black] (22.,15.) circle (0.5pt);
\draw [fill=black] (18.,15.) circle (0.5pt);
\draw [fill=black] (14.,17.) circle (0.5pt);
\draw [fill=black] (20.,16.) circle (0.5pt);
\draw [fill=black] (18.,16.) circle (0.5pt);
\draw [fill=black] (16.,16.) circle (0.5pt);
\draw [fill=black] (14.,16.) circle (0.5pt);
\draw [fill=black] (22.,17.) circle (0.5pt);
\draw [fill=black] (20.,15.) circle (0.5pt);
\draw [fill=black] (16.,17.) circle (0.5pt);
\draw [fill=black] (20.,17.) circle (0.5pt);
\draw [fill=black] (18.,17.) circle (0.5pt);
\draw [fill=black] (22.,13.) circle (0.5pt);
\draw [fill=black] (20.,13.) circle (0.5pt);
\draw [fill=black] (18.,14.) circle (0.5pt);
\draw [fill=black] (22.,14.) circle (0.5pt);
\draw [fill=black] (22.,12.) circle (0.5pt);
\draw [fill=black] (18.,13.) circle (0.5pt);
\draw [fill=black] (18.,12.) circle (0.5pt);
\draw [fill=black] (20.,12.) circle (0.5pt);
\draw [fill=black] (20.,14.) circle (0.5pt);
\draw [fill=black] (18.,12.) ++(-1.5pt,0 pt) -- ++(1.5pt,1.5pt)--++(1.5pt,-1.5pt)--++(-1.5pt,-1.5pt)--++(-1.5pt,1.5pt);
\draw [fill=black] (28.,4.) ++(-1.5pt,0 pt) -- ++(1.5pt,1.5pt)--++(1.5pt,-1.5pt)--++(-1.5pt,-1.5pt)--++(-1.5pt,1.5pt);
\draw [fill=black] (14.,15.) ++(-1.5pt,0 pt) -- ++(1.5pt,1.5pt)--++(1.5pt,-1.5pt)--++(-1.5pt,-1.5pt)--++(-1.5pt,1.5pt);
\draw [fill=black] (10.,17.) ++(-1.5pt,0 pt) -- ++(1.5pt,1.5pt)--++(1.5pt,-1.5pt)--++(-1.5pt,-1.5pt)--++(-1.5pt,1.5pt);
\draw [fill=black] (8.,18.) ++(-1.5pt,0 pt) -- ++(1.5pt,1.5pt)--++(1.5pt,-1.5pt)--++(-1.5pt,-1.5pt)--++(-1.5pt,1.5pt);
\draw [fill=black] (6.,22.) ++(-1.5pt,0 pt) -- ++(1.5pt,1.5pt)--++(1.5pt,-1.5pt)--++(-1.5pt,-1.5pt)--++(-1.5pt,1.5pt);
\draw [fill=black] (26.,7.) ++(-1.5pt,0 pt) -- ++(1.5pt,1.5pt)--++(1.5pt,-1.5pt)--++(-1.5pt,-1.5pt)--++(-1.5pt,1.5pt);
\end{scriptsize}
\end{tikzpicture}
\caption{Description of the set $\Delta$}\label{figure1}
\end{figure}
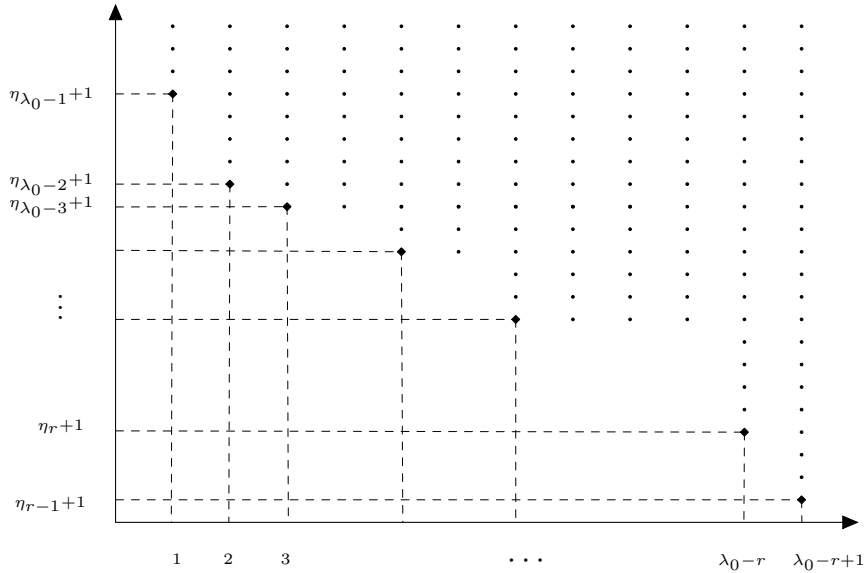
Now, we provide sufficient conditions to determine whether the semigroup $H(Q_{\infty})$ is symmetric. For this, we need a remark and a lemma.
\begin{remark}\label{remark_imp}
Due to the characterization of the sequence $\eta_{r}\leq \eta_{r+1}\leq \dots \leq \eta_{\la_0-1}$ given in Lemma \ref{lemma1}, we can see that, for $s\in \N_0$, $\eta_{s}+\eta_{r+\la_0-1-s}=m$ or $\eta_{s}+\eta_{r+\la_0-1-s}=m-1$. In fact, if $0\leq s\leq r-1$ or $\la_0\leq s$ the assertion is clear. Let $k\in \{r, \dots, \la_0-1\}$ and  $n\in \N$ be such that
$$
\eta_{k-1}<\eta_{k}=\eta_{k+1}=\dots =\eta_{k+n-1}< \eta_{k+n}.
$$
From Lemma \ref{lemma1}, there exist exactly $n$ distinct elements $j_1, \dots, j_n\in \{1, \dots, r\}$ and positive integers $s_{j_1}, \dots, s_{j_n}$ such that $1\leq s_{j_i}< \la_{j_i}$ and
$$
\eta_{k}=\floor*{\frac{s_{j_1}m}{\la_{j_1}}}=\floor*{\frac{s_{j_2}m}{\la_{j_2}}}=\dots =\floor*{\frac{s_{j_n}m}{\la_{j_n}}}.
$$
Without loss of generality, we can assume that 
$$
\ceil*{\frac{s_{j_{1}}m}{\la_{j_{ 1}}}}\leq \ceil*{\frac{s_{j_{2}}m}{\la_{j_{2}}}} \leq \dots \leq \ceil*{\frac{s_{j_{n}}m}{\la_{j_{ n}}}}
$$
and therefore
$$
\floor*{\frac{(\la_{j_{n}}-s_{j_{n}})m}{\la_{j_{n}}}}\leq \floor*{\frac{(\la_{j_{n-1}}-s_{j_{n-1}})m}{\la_{j_{n-1}}}}\leq \dots \leq \floor*{\frac{(\la_{j_{1}}-s_{j_{1}})m}{\la_{j_{1}}}}.
$$
This leads to
$$
\eta_{r+\la_0-1-(k+i)}=\floor*{\frac{(\la_{j_{i+1}}-s_{j_{i+1}})m}{\la_{j_{i+1}}}} \text{ for }i=0, \dots, n-1
$$
and, consequently,
$$
\eta_{k+i}+\eta_{r+\la_0-1-(k+i)}=\floor*{\frac{s_{j_{i+1}}m}{\la_{j_{i+1}}}}+\floor*{\frac{(\la_{j_{i+1}}-s_{j_{i+1}})m}{\la_{j_{i+1}}}}=m-\left(\ceil*{\frac{s_{j_{i+1}}m}{\la_{j_{i+1}}}}-\floor*{\frac{s_{j_{i+1}}m}{\la_{j_{i+1}}}}\right)
$$
for $i=0, \dots, n-1$. In particular, if $(m, \la_j)=1$ for each $j$, we obtain that $\eta_{s}+\eta_{r+\la_0-1-s}=m-1$ for $s\in \N_0$, and if $\la_j$ divides $m$ for each $j$, we obtain that $\eta_{s}+\eta_{r+\la_0-1-s}=m$ for $s=r, \dots, \la_0-1$.
\end{remark}
\begin{lemma}\label{lemma_2}
For $k\in \N_0$, the following statements hold:
\begin{enumerate}[i)] 
\item If $\eta_k+\eta_{r+\la_0-1-k}=m$, then $\epsilon_k+\epsilon_{r+\la_0-1-k}=\epsilon_{r-1}-\la_0$ and $\epsilon_{r-1}> \epsilon_k$.
\item If $\eta_k+\eta_{r+\la_0-1-k}=m-1$, then $\epsilon_k+\epsilon_{r+\la_0-1-k}=\epsilon_{r-1}$, and $\epsilon_{r-1}> \epsilon_k$ if and only if $0 <\epsilon_{r+\la_0-1-k}$. 
\item $\epsilon_k < 0$ if and only if $\eta_{k}=\floor*{\frac{km}{\la_0}}$.
\end{enumerate}
\end{lemma}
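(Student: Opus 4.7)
The plan is to reduce all three parts to two computations: an identity for $\epsilon_k+\epsilon_{r+\la_0-1-k}$ in terms of $\epsilon_{r-1}$ and the sum $\eta_k+\eta_{r+\la_0-1-k}$, and the uniform bound $\la_0\eta_k\leq mk$ valid for every $k\in\N_0$.

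First I would observe, using $\epsilon_{r-1}=m(r-1)-\la_0$, that
$$
\epsilon_k+\epsilon_{r+\la_0-1-k}=m(r+\la_0-1)-\la_0(\eta_k+\eta_{r+\la_0-1-k}+2)=\epsilon_{r-1}+\la_0\bigl(m-\eta_k-\eta_{r+\la_0-1-k}-1\bigr).
$$
Specializing the bracketed factor to $-1$ and $0$ gives the two claimed equalities in (i) and (ii). Rearranging in the case of (ii) gives $\epsilon_{r-1}-\epsilon_k=\epsilon_{r+\la_0-1-k}$, which immediately yields the stated iff.

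Next I would establish the bound $\la_0\eta_k\leq mk$ for every $k\in\N_0$. For $r\leq k\leq\la_0-1$, if $(s_1,\ldots,s_r)$ is a witness tuple from Lemma \ref{lemma1} with $\sum_i s_i=k$ and $\eta_k=\min_i\floor*{s_im/\la_i}$, then each $i$ satisfies $\la_i\eta_k\leq s_im$; summing gives $\la_0\eta_k\leq mk$. The remaining ranges $0\leq k<r$ and $k\geq\la_0$ are trivial from (\ref{eta_epsilon}). Part (iii) then follows immediately: $\epsilon_k<0$ is equivalent to $\eta_k+1>mk/\la_0$, hence (since $\eta_k\in\Z$) to $\eta_k\geq\floor*{mk/\la_0}$, which combined with $\la_0\eta_k\leq mk$ forces $\eta_k=\floor*{mk/\la_0}$; the converse direction is direct.

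Finally, for the strict inequality in (i), I would apply the bound just proved to the index $r+\la_0-1-k$: the hypothesis $\eta_k+\eta_{r+\la_0-1-k}=m$ together with $\la_0\eta_{r+\la_0-1-k}\leq m(r+\la_0-1-k)$ rearranges to $\la_0\eta_k\geq m(k-r+1)$, which is exactly $\epsilon_{r-1}-\epsilon_k\geq 0$ via the identity $\epsilon_{r-1}-\epsilon_k=\la_0\eta_k-m(k-r+1)$. The delicate point, and the one place where $\gcd(m,\la_0)=1$ is used, is strictness. Case (i) forces $r\leq k\leq\la_0-1$, since otherwise one of $\eta_k,\eta_{r+\la_0-1-k}$ equals $0$ by (\ref{eta_epsilon}) while the other is at most $m-1$, making the sum at most $m-1$; hence $k-r+1\in[1,\la_0-r]$, and $\gcd(m,\la_0)=1$ implies $\la_0\nmid m(k-r+1)$, ruling out equality and completing the proof.
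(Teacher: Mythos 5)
Your proof is correct and follows essentially the same route as the paper: the same algebraic identity relating $\epsilon_k+\epsilon_{r+\la_0-1-k}$ to $\epsilon_{r-1}$, and the same bound $\la_0\eta_k\leq mk$ for part (iii). The only difference is cosmetic: where the paper asserts $\epsilon_{r+\la_0-1-k}+\la_0>0$ as immediate from the definition of $\eta$, you justify the strictness explicitly via $(m,\la_0)=1$, which is a welcome clarification rather than a deviation.
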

\begin{proof}
$i)$ It is enough to note that
\begin{align*}
\epsilon_{r+\la_0-1-k}&=m(r+\la_0-1-k)-\la_0(\eta_{r+\la_0-1-k}+1) \\ 
&=m(r+\la_0-1-k)-\la_0\left(m-\eta_k+1\right) \\
&=m(r-1)-\la_0-mk+\la_0\eta_k  \\
&=\epsilon_{r-1}-\epsilon_k-\la_0. 
\end{align*}
Therefore, $\epsilon_{r-1}-\epsilon_k=\epsilon_{r+\la_0-1-k}+\la_0> 0$. 

$ii)$ Similar to item $i)$.

$iii)$ Since $mk=\la_0\eta_k + (mk-\la_0\eta_k)$ and $0\leq mk-\la_0\eta_k$, we conclude that $\eta_k=\floor*{km/\la_0}$ if and only if $mk-\la_0\eta_k<\la_0$.
\end{proof}
\begin{theorem}\label{teo_2}
With the same notation as in Theorem \ref{theorem1},
$$F_{H(Q_{\infty})}=\epsilon_{r-1}\text{ and }H(Q_{\infty}) \text{ is symmetric}\quad \Leftrightarrow \quad \la_j\mid m \text{ for each }j=1, \dots, r.
$$
\end{theorem}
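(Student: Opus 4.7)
The plan is to prove each direction of the equivalence separately, leveraging Proposition~\ref{prop_1}, Lemma~\ref{lemma_2}, Remark~\ref{remark_imp} and the genus formula~\eqref{genusX}. The arithmetic backbone of both directions is the observation that $\epsilon_{r-1} = m(r-1) - \la_0$, so a short manipulation of \eqref{genusX} yields
\[
\epsilon_{r-1} - (2g(\cX) - 1) = \sum_{i=1}^{r}(m,\la_i) - \la_0,
\]
and hence $\epsilon_{r-1} \leq 2g(\cX) - 1$ always, with equality if and only if $(m,\la_i) = \la_i$ for every $i$, i.e.\ $\la_i \mid m$ for every $i$.

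For the implication $(\Leftarrow)$, I would assume $\la_j \mid m$ for each $j$, so that $(m,\la_j) = \la_j$ and the displayed identity already gives $\epsilon_{r-1} = 2g(\cX) - 1$. The last sentence of Remark~\ref{remark_imp} then supplies $\eta_s + \eta_{r+\la_0-1-s} = m$ for every $s \in \{r, \dots, \la_0-1\}$; since the complementary index $r+\la_0-1-s$ also lies in that range, Lemma~\ref{lemma_2}(i) applies and yields $\epsilon_{r-1} > \epsilon_s$ for each such $s$. Proposition~\ref{prop_1} thus gives $F_{H(Q_{\infty})} = \max\{\epsilon_{r-1},\dots,\epsilon_{\la_0-1}\} = \epsilon_{r-1} = 2g(\cX) - 1$, which simultaneously establishes the Frobenius equality and symmetry of $H(Q_{\infty})$.

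For the converse $(\Rightarrow)$, I would assume both $F_{H(Q_{\infty})} = \epsilon_{r-1}$ and $H(Q_{\infty})$ symmetric; these two hypotheses combine to give $\epsilon_{r-1} = 2g(\cX) - 1$, and the displayed identity then forces $\sum_{i=1}^r (m, \la_i) = \la_0 = \sum_{i=1}^r \la_i$. Since $(m,\la_i) \leq \la_i$ with equality if and only if $\la_i \mid m$, this equality of sums forces $\la_i \mid m$ for every $i$. No genuine obstacle appears; the only point needing mild care is to check that Lemma~\ref{lemma_2}(i) is invoked with a valid index, which is automatic since the complementary index $r+\la_0-1-k$ stays in $\{r,\dots,\la_0-1\}$ whenever $k$ does, placing us exactly within the range where Remark~\ref{remark_imp} delivers the required identity $\eta_k + \eta_{r+\la_0-1-k} = m$.
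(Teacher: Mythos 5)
Your proposal is correct and follows essentially the same route as the paper: the reverse direction uses Remark~\ref{remark_imp} plus Lemma~\ref{lemma_2}(i) to get $\epsilon_{r-1}>\epsilon_k$ and then Proposition~\ref{prop_1} to identify $F_{H(Q_\infty)}=\epsilon_{r-1}=2g(\cX)-1$, while the forward direction reduces to $\sum_{i}(m,\la_i)=\la_0$ via the genus formula, exactly as in the paper. Your explicit identity $\epsilon_{r-1}-(2g(\cX)-1)=\sum_{i=1}^r(m,\la_i)-\la_0$ is just a slightly more streamlined packaging of the same computation.
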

\begin{proof}
Suppose that $H(Q_\infty)$ is symmetric and $F_{H(Q_\infty)}=\epsilon_{r-1}$. From (\ref{genusX}) we obtain that
$$
F_{H(Q_{\infty})}=m(r-1)-\la_0=m(r-1)-\sum_{j=1}^{r}(m, \la_j).
$$
This implies that $\la_j$ divides $m$ for each $j=1, \dots, r$. 

Conversely, assume that $\la_j$ divides $m$ for each $j=1, \dots, r$. From Remark \ref{remark_imp} we have that $\eta_k+\eta_{r+\la_0-1-k}=m$ for $k=r, \dots, \la_0-1$, and from item $i)$ of Lemma \ref{lemma_2}, $\epsilon_{r-1}>\epsilon_k$ for $k=r, \dots, \la_0-1$. Therefore, from Proposition \ref{prop_1}, $F_{H(Q_{\infty})}=\max\{\epsilon_{r-1}, \dots, \epsilon_{\la_0-1}\}=\epsilon_{r-1}$ and 
$$
2g(\cX)-1=m(r-1)-\sum_{i=j}^{r}(m, \la_j)=m(r-1)-\la_0=\epsilon_{r-1}=F_{H(Q_{\infty})}.
$$
\end{proof}
\begin{example} From Example \ref{exam_GSS}, we know that the Weierstrass semigroup at the only place at infinity of the $GGS$ curve is given by $H(Q_{\infty})=\langle q^n+1, q^3, q(q^n+1)/(q+1)\rangle$. Therefore, we can determine if $H(Q_{\infty})$ is symmetric and we can calculate the Frobenius number $F_{H(Q_{\infty})}$. However, due to Theorem \ref{teo_2}, it is possible to know this without computing the semigroup $H(Q_{\infty})$ explicitly. In fact, since $q+1$ divides $q^n+1$, $H(Q_{\infty})$ is symmetric and 
$$
F_{H(Q_{\infty})}=(q^n+1)(q^2-1)-q^3=q^{n+2}-q^n-q^3+q^2-1.
$$ 
\end{example}
Next, we improve Proposition \ref{prop_1} to compute the Frobenius number $F_{H(Q_{\infty})}$ and establish a relationship between $F_{H(Q_{\infty})}$ and the multiplicity $m_{H(Q_{\infty})}$.
\begin{proposition}\label{prop_3}
Using the same notation as in Theorem \ref{theorem1}, the following statements hold:
\begin{enumerate}[i)]
\item $F_{H(Q_{\infty})}=\epsilon_{r-1}$ if and only if $\eta_s<\floor*{sm/\la_0}$ for each $s\in \{r, \dots, \la_0-1\}$ such that $\eta_s+\eta_{r+\la_0-1-s}=m-1$.
\item $F_{H(Q_{\infty})}=\max_{r-1\leq k < \la_0}\left\{\epsilon_{k}: \eta_{k}=\floor*{\frac{(k+1-r)m}{\la_0}}\right\}$.
\item If $(m, \la_j)=1$ for each $j=1, \dots, r$, then $m_{H(Q_{\infty})}=\min\{m, m(r-1)-F_{H(Q_{\infty})}\}$. 
\item If $\la_j$ divides $m$ for each $j=1, \dots, r$, then $m_{H(Q_{\infty})}=\min\left\{m, \la_0, \epsilon_{r-1}-\max_{r\leq k <\la_0}\epsilon_k\right\}$. 
\end{enumerate}
\end{proposition}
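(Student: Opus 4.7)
The plan is to handle the four parts in order, relying on Proposition \ref{prop_1}, Remark \ref{remark_imp}, Lemma \ref{lemma_2}, and the generator presentation (\ref{Hinf}).

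For $(i)$, by Remark \ref{remark_imp} each sum $\eta_k+\eta_{r+\la_0-1-k}$ is either $m$ or $m-1$. In the first case Lemma \ref{lemma_2}$(i)$ gives $\epsilon_{r-1}>\epsilon_k$ for free; in the second case Lemma \ref{lemma_2}$(ii)$ says $\epsilon_{r-1}>\epsilon_k$ iff $\epsilon_{r+\la_0-1-k}>0$, and by Lemma \ref{lemma_2}$(iii)$ together with the observation that $\gcd(m,\la_0)=1$ forces $\epsilon_s\neq 0$ for $r\leq s<\la_0$, this last inequality is equivalent to $\eta_{r+\la_0-1-k}<\floor*{(r+\la_0-1-k)m/\la_0}$. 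Substituting $s=r+\la_0-1-k$ (which preserves the symmetric condition $\eta_s+\eta_{r+\la_0-1-s}=m-1$), Proposition \ref{prop_1} then yields $F_{H(Q_{\infty})}=\epsilon_{r-1}$ if and only if $\eta_s<\floor*{sm/\la_0}$ for every such $s$.

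For $(ii)$, set $j:=k-r+1$ and rewrite $\epsilon_k-\epsilon_{r-1}=mj-\la_0\eta_k$, so that the condition $\eta_k=\floor*{jm/\la_0}$ is equivalent to $0\leq\epsilon_k-\epsilon_{r-1}<\la_0$. Since every maximizer $k^*$ in Proposition \ref{prop_1} automatically satisfies $\epsilon_{k^*}\geq\epsilon_{r-1}$, the statement reduces to proving $\eta_k\geq\floor*{jm/\la_0}$ for every $k\in\{r-1,\dots,\la_0-1\}$. I would establish this lower bound by constructing a valid tuple: set $t:=\floor*{jm/\la_0}$ and $s_i^{(0)}:=\max(\ceil*{t\la_i/m},1)$, and note
\begin{equation*}
\sum_{i=1}^{r}\ceil*{t\la_i/m}=\sum_{i=1}^{r}\floor*{t\la_i/m}+\#\{i:m\nmid t\la_i\}\leq\floor*{t\la_0/m}+r\leq(j-1)+r=k,
\end{equation*}
where $\floor*{t\la_0/m}\leq j-1$ follows from $\gcd(m,\la_0)=1$ and $0<j<\la_0$, which make $t\la_0/m<j$ strictly. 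One then pads $(s_i^{(0)})$ to exact sum $k$ with $s_i\leq\la_i$ (possible since $\la_0\geq k$), obtaining $\min_i\floor*{s_im/\la_i}\geq t$ and hence $\eta_k\geq t$.

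For $(iii)$ and $(iv)$, I use (\ref{Hinf}) to present $m_{H(Q_{\infty})}$ as the minimum of the positive generators $m$, $\la_0$, and $\epsilon_k+\la_0=mk-\la_0\eta_k$ for $k\in\{r,\dots,\la_0-1\}$. Under the hypothesis of $(iii)$, Remark \ref{remark_imp} gives $\eta_s+\eta_{r+\la_0-1-s}=m-1$ for every $s$, so Lemma \ref{lemma_2}$(ii)$ yields $\epsilon_k+\epsilon_{r+\la_0-1-k}=\epsilon_{r-1}$; the involution $k\leftrightarrow r+\la_0-1-k$ on $\{r,\dots,\la_0-1\}$ then rewrites $\min_k(\epsilon_k+\la_0)$ as $m(r-1)-M$ with $M:=\max_{r\leq k<\la_0}\epsilon_k$. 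Since $F_{H(Q_{\infty})}=\max(\epsilon_{r-1},M)$ by Proposition \ref{prop_1}, a case split on whether $F_{H(Q_{\infty})}=\epsilon_{r-1}$ (so $m(r-1)-F_{H(Q_{\infty})}=\la_0\leq m(r-1)-M$) or $F_{H(Q_{\infty})}=M>\epsilon_{r-1}$ (so $m(r-1)-M<\la_0$) collapses $\min\{m,\la_0,m(r-1)-M\}$ to $\min\{m,m(r-1)-F_{H(Q_{\infty})}\}$, giving $(iii)$. Part $(iv)$ is parallel: the hypothesis $\la_j\mid m$ upgrades the pairing identity to $\eta_s+\eta_{r+\la_0-1-s}=m$ on $\{r,\dots,\la_0-1\}$, so by Lemma \ref{lemma_2}$(i)$ we have $\epsilon_k+\epsilon_{r+\la_0-1-k}=\epsilon_{r-1}-\la_0$, and the same involution gives $\min_k(\epsilon_k+\la_0)=\epsilon_{r-1}-M$, yielding the stated formula directly.

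The main obstacle is the lower bound $\eta_k\geq\floor*{(k-r+1)m/\la_0}$ in $(ii)$, since the description of $\eta_k$ from Lemma \ref{lemma1} is extremal rather than closed-form; the remaining work is bookkeeping around the pairing identity $\epsilon_k+\epsilon_{r+\la_0-1-k}\in\{\epsilon_{r-1},\epsilon_{r-1}-\la_0\}$ and the coprimality $\gcd(m,\la_0)=1$.
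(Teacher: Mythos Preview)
Your argument is correct. Parts $(i)$, $(iii)$, $(iv)$ follow essentially the same route as the paper: the pairing identity $\epsilon_k+\epsilon_{r+\la_0-1-k}\in\{\epsilon_{r-1},\,\epsilon_{r-1}-\la_0\}$ from Lemma~\ref{lemma_2}, the involution $k\leftrightarrow r+\la_0-1-k$, and (for $(iii)$) the same case split collapsing $\min\{m,\la_0,m(r-1)-M\}$ to $\min\{m,m(r-1)-F_{H(Q_\infty)}\}$.

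For $(ii)$ your approach genuinely differs from the paper's. You prove the key lower bound $\eta_k\ge\floor*{(k-r+1)m/\la_0}$ by \emph{explicitly constructing} a feasible tuple $(s_i)$ with $\sum s_i=k$ and $\min_i\floor*{s_im/\la_i}\ge t$; this is self-contained and does not touch the pairing at all. The paper instead works entirely through the partner index $s=r+\la_0-1-k$: it rewrites the condition ``$\epsilon_k\ge\epsilon_{r-1}$'' via Lemma~\ref{lemma_2}$(ii),(iii)$ as ``$\eta_s=\floor*{sm/\la_0}$ and $\eta_k+\eta_s=m-1$'', and then uses the floor identity $\floor*{(\la_0-s)m/\la_0}=m-1-\floor*{sm/\la_0}$ (valid since $(m,\la_0)=1$ and $0<s<\la_0$) to turn this into $\eta_k=\floor*{(k+1-r)m/\la_0}$. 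In effect the paper obtains your lower bound in one line from the pairing $\eta_k\ge m-1-\eta_s\ge m-1-\floor*{sm/\la_0}=\floor*{(k-r+1)m/\la_0}$, whereas your constructive proof is longer but gives an independent combinatorial reason for the bound that does not rely on Remark~\ref{remark_imp}.
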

\begin{proof}
$i)$ It follows from Lemma \ref{lemma_2} and the fact that $\eta_s\leq \floor*{sm/\la_0}$ for all $s\in \N_0$.

$ii)$ It is enough to note that, from Lemma \ref{lemma_2}, we can rewrite the Frobenius number $F_{H(Q_{\infty})}$ as
\begin{align*}\label{eq_Frob}
F_{H(Q_{\infty})}&=\max_{r\leq k < \la_0}\left\{\epsilon_{r-1}, \epsilon_{k}: \epsilon_{r+\la_0-1-k}<0, \, \eta_{k}+\eta_{r+\la_0-1-k}=m-1\right\}\\
&=\max_{r\leq k < \la_0}\left\{\epsilon_{r-1}, \epsilon_{k}: \eta_{r+\la_0-1-k}=\floor*{\frac{(r+\la_0-1-k)m}{\la_0}}, \, \eta_{k}+\eta_{r+\la_0-1-k}=m-1\right\}\\
&=\max_{r\leq k < \la_0}\left\{\epsilon_{r-1}, \epsilon_{k}: \eta_{k}=\floor*{\frac{(k+1-r)m}{\la_0}}\right\}\\
&=\max_{r-1\leq k < \la_0}\left\{\epsilon_{k}: \eta_{k}=\floor*{\frac{(k+1-r)m}{\la_0}}\right\}.
\end{align*}
$iii)$ From (\ref{Hinf}) and Lemma \ref{lemma_2}, we obtain that 
\begin{align*}
m_{H(Q_{\infty})}&=\min\left\{m, \la_0, \la_0+\min_{r\leq k <\la_0}\epsilon_k\right\}\\
&=\min\left\{m, \la_0, \la_0+\min_{r\leq k <\la_0}\{\epsilon_{r-1}-\epsilon_{r+\la_0-1-k}\}\right\}\\
&=\min\left\{m, \la_0, \la_0+\epsilon_{r-1}-\max_{r\leq k <\la_0}\epsilon_{r+\la_0-1-k}\right\}\\
&=\min\left\{m, \la_0, \la_0+\epsilon_{r-1}-\max_{r\leq k <\la_0}\epsilon_{k}\right\}\\
&=\min\left\{m, m(r-1)-F_{H(Q_{\infty})}\right\}.
\end{align*}
$iv)$ Similar to the proof of item $iii)$.
\end{proof}
Next, we observe that for the curve $\cX$ defined in (\ref{curveX}), the elements of the set $\{\epsilon_k+\la_0: k=0, \dots, \la_0-1\}\subseteq H(Q_{\infty})$ form a complete set of representatives for the congruence classes of $\Z$ modulo $\la_0$ and 
$$\sum_{k=0}^{\la_0-1}\floor*{\frac{\epsilon_k+\la_0}{\la_0}}=g(\cX).
$$
Therefore, from Proposition \ref{prop2_pre}, the Apéry set of  $\la_0$ in the Weierstrass semigroup $H(Q_{\infty})$ is given by 
$$\Ap(H(Q_{\infty}), \la_0)=\left\{\epsilon_k+\la_0: k=0, \dots, \la_0-1\right\}.$$
We use this description of the Apéry set $\Ap(H(Q_{\infty}), \la_0)$ to characterize the symmetric Weierstrass semigroups $H(Q_{\infty})$ when $(m, \la_j)=1$ for each $j=1, \dots, r$.
\begin{theorem}\label{teo_sym}
Suppose that $(m, \la_j)=1$ for $j=1, \dots, r$. Then the followings statements are equivalent:
\begin{enumerate}[i)]
\item $H(Q_{\infty})=\langle m, r \rangle$.
\item $\la_1=\la_2=\dots =\la_r$.
\end{enumerate}
If in addition $r<m$, then all these statements are equivalent to the following:
\begin{enumerate}[i)]
\item[iii)] $H(Q_{\infty})$ is symmetric. 
\end{enumerate}
\end{theorem}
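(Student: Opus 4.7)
The plan is to establish the equivalences via (ii) $\Rightarrow$ (i), (i) $\Rightarrow$ (ii), (i) $\Rightarrow$ (iii) (trivial), and (iii) $\Rightarrow$ (ii) under the extra hypothesis $r<m$. For (ii) $\Rightarrow$ (i), write $\la := \la_1 = \dots = \la_r$, so $\la_0 = r\la$ and $\gcd(m,\la) = 1$. For any $k \in \{r, \dots, r\la - 1\}$ written as $k = qr + p$ with $0 \le p \le r-1$ and $q = \floor*{k/r} \in \{1, \dots, \la-1\}$, I would show that the maximizer of $\min_j \floor*{s_j m/\la}$ subject to $\sum s_j = k$, $1 \le s_j \le \la$, is obtained by distributing the $s_j$ as evenly as possible ($p$ coordinates equal to $q+1$, the others equal to $q$); this yields $\eta_k = \floor*{qm/\la}$ via Lemma \ref{lemma1}. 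A short computation reduces each generator $mk - \la_0\eta_k$ of Theorem \ref{theorem1} to $mp + r(qm \bmod \la)$, which visibly belongs to $\langle m, r\rangle$. Taking $p = 0$ together with the unique $q$ satisfying $qm \equiv 1 \pmod{\la}$ even produces $r$ itself as a generator, so both inclusions hold and $H(Q_\infty) = \langle m, r\rangle$.

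For (i) $\Rightarrow$ (ii), assume $H(Q_\infty) = \langle m, r\rangle$; since this is a numerical semigroup, $\gcd(m,r) = 1$. If $\la_0 = r$ then all $\la_j = 1$ and (ii) is immediate, so suppose $\la_0 > r$. Then $r \notin \langle m, \la_0\rangle$, because any representation $r = am + b\la_0$ with $a,b \ge 0$ and $0 < r < \la_0$ forces $b = 0$ and hence $m \mid r$, contradicting $m \ge 2$ and $\gcd(m,r) = 1$. By Theorem \ref{theorem1} there exists $k^* \in \{r, \dots, \la_0 - 1\}$ with $r = mk^* - \la_0 \eta_{k^*}$. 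Fix a tuple $(s_1, \dots, s_r)$ realizing $\eta_{k^*} = \min_j \floor*{s_j m/\la_j}$ with $\sum s_j = k^*$. The crucial identity
\[
r = mk^* - \la_0 \eta_{k^*} = \sum_{j=1}^{r} \bigl(m s_j - \la_j \eta_{k^*}\bigr)
\]
expresses $r$ as a sum of $r$ non-negative integers (from $\floor*{s_j m/\la_j} \ge \eta_{k^*}$). None of them vanishes: $m s_j = \la_j \eta_{k^*}$ combined with $\gcd(m,\la_j) = 1$ would force $s_j = \la_j$ and hence the impossible $\eta_{k^*} = m$. So each of the $r$ positive integers equals exactly $1$, giving $\la_j \eta_{k^*} \equiv -1 \pmod m$ for every $j$; in particular $\gcd(m, \eta_{k^*}) = 1$, and the common residue $-\eta_{k^*}^{-1} \bmod m$ pins down each $\la_j$ uniquely because of the range $1 \le \la_j < m$. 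Hence $\la_1 = \dots = \la_r$.

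The implication (i) $\Rightarrow$ (iii) is immediate, as any numerical semigroup generated by two coprime integers is symmetric. For (iii) $\Rightarrow$ (ii) under $r < m$, the hypothesis $\gcd(m, \la_j) = 1$ simplifies the genus formula \eqref{genusX} to $g(\cX) = (m-1)(r-1)/2$, so symmetry gives $F_{H(Q_\infty)} = 2g(\cX) - 1 = mr - m - r$, and Proposition \ref{prop_3}(iii) then yields
\[
m_{H(Q_\infty)} = \min\{m,\, m(r-1) - F_{H(Q_\infty)}\} = \min\{m, r\} = r.
\]
Thus $r \in H(Q_\infty)$, and the decomposition argument of the preceding paragraph applies verbatim. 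The main obstacle I expect is precisely this combinatorial step showing that the $r$ summands $m s_j - \la_j \eta_{k^*}$ are all positive and hence all equal to $1$: the non-vanishing relies on $\gcd(m,\la_j) = 1$ while the promotion of the resulting residue constraint to an actual equality of the $\la_j$ requires $\la_j < m$, so the argument is genuinely sensitive to the precise hypotheses of the theorem.
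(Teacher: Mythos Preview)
Your proof is correct and, for the harder implications, genuinely more economical than the paper's. One small point deserves explicit mention: when you assert that $r = mk^* - \la_0\eta_{k^*}$, Theorem~\ref{theorem1} only gives a priori $r = mk^* - k'\la_0$ with $1 \le k' \le \eta_{k^*}$. The equality $k' = \eta_{k^*}$ does hold, but because $mk^* - \la_0\eta_{k^*} > 0$ and $r < \la_0$ force $(\eta_{k^*} - k')\la_0 < r < \la_0$; you should say this. Similarly, in (iii) $\Rightarrow$ (ii) the reason $m \nmid r$ is $r < m$, not $\gcd(m,r)=1$, so ``verbatim'' is a slight overstatement, though the argument transfers with this one-word change.

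The comparison with the paper is worth recording. For (ii) $\Rightarrow$ (i) the two arguments are essentially the same. For (i) $\Rightarrow$ (ii), the paper first proves $r \mid \la_0$, writes $\la_0 = \tau r$, and then runs an inductive comparison of $\eta_{r\beta+i}$ against $\lfloor \beta m/\tau\rfloor$ using Proposition~\ref{prop_pre} and the multiset description of Lemma~\ref{lemma1}, eventually forcing $\tau = \max_i \la_i$. Your single identity $r = \sum_j (m s_j - \la_j\eta_{k^*})$, a sum of $r$ positive integers, bypasses all of that and yields the congruences $\la_j\eta_{k^*} \equiv -1 \pmod m$ in one stroke. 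For (iii) $\Rightarrow$ (i), the paper argues via the Ap\'ery set $\Ap(H(Q_\infty),\la_0)$ and Proposition~\ref{prop3_pre}, building a chain $\epsilon_{\sigma(i_j)} = -jr$ to extract $\gcd(m,r)=1$, and then concludes by a genus count. You instead use Proposition~\ref{prop_3}(iii) to get $m_{H(Q_\infty)} = r$ directly and recycle your combinatorial decomposition; this avoids the Ap\'ery machinery entirely. The paper's route gives more structural information about the Ap\'ery set along the way, but yours is shorter and isolates exactly where the hypotheses $\gcd(m,\la_j)=1$ and $\la_j < m$ are consumed.
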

\begin{proof}
Clearly the result holds if $r=\la_0$. Suppose that $r<\la_0$.

$i) \Rightarrow ii):$ We start by proving that $r$ divides $\la_0$. In fact, since $\la_0, mr-\la_0\in H(Q_{\infty})=\langle m, r\rangle$, there exist $\al, \al', \tau, \tau'\in \N_{0}$, where $\tau, \tau'\leq m-1$ and $\tau\neq 0$, such that $\la_0=\al m+\tau r$ and $mr-\la_0=\al'm+\tau' r$. Therefore $m(r-\al-\al')=r(\tau+\tau')$. Since $H(Q_{\infty})=\langle m, r \rangle$, $(m, r)=1$ and therefore $m$ divides $\tau+\tau'$, where $1\leq \tau+\tau'\leq 2m-2$. This implies that $\tau+\tau'=m$ and $\al=-\al'$. It follows that $\al=\al'=0$ and $\la_0=\tau r$.

Now, let $\la:=\max_{1\leq i \leq r}\la_i$ and note that $\tau r =\la_0=\sum_{i=1}^{r}\la_i \leq \la r$, therefore $\tau\leq \la$. In the following, we prove that $\tau=\la$, which implies that $\la_1=\la_2=\dots=\la_r$.

For $\be\in \{1, \dots, \tau-1\}$ and $i\in \{0, \dots, r-1\}$
we have that
$$
\epsilon_{\beta r+i}+\la_0=mr-(r-i)m-(\tau\eta_{r\beta+i}-m\beta)r \in H(Q_{\infty})=\langle m, r \rangle.
$$
Therefore, from Proposition \ref{prop_pre}, it follows that 
\begin{equation}\label{inequality1}
\eta_{r\beta +i}\leq \floor*{\frac{\beta m}{\tau}} \text{ for }1\leq \be\leq  \tau-1 \text{ and }0\leq i \leq r-1.
\end{equation}
For $\be=1$ in (\ref{inequality1}) we obtain that 
$$
\Bigg\lfloor\frac{m}{\la}\Bigg\rfloor=\eta_{r}\leq \eta_{r+i}\leq \Bigg\lfloor\frac{m}{\tau}\Bigg\rfloor \,\text{ for }\,0\leq i\leq r-1,
$$
and for $\be=\tau-1$ and $i=r-1$ in (\ref{inequality1}),
$$
\quad m-\Bigg\lceil\frac{m}{\la}\Bigg\rceil=\floor*{\frac{(\la-1)m}{\la}}=\eta_{\la_0-1}=\eta_{r(\tau-1)+r-1}\leq \floor*{\frac{(\tau-1)m}{\tau}}=m-\Bigg\lceil\frac{m}{\tau}\Bigg\rceil.
$$
Since $(m, \la)=(m, \tau)=1$, then $\floor*{\frac{m}{\la}}=\floor*{\frac{m}{\tau}}$ and therefore $\eta_{r+i}=\floor*{\frac{m}{\la}}$ for $0\leq i\leq r-1$. Thus, from the characterization of the sequence $\eta_{r}\leq \eta_{r+1}\leq \dots \leq \eta_{\la_0-1}$ given in (\ref{multiseteq}), we have that 
$$
\eta_r=\floor*{\frac{m}{\la_1}}=\floor*{\frac{m}{\la_2}}=\dots=\floor*{\frac{m}{\la_r}}=\eta_{2r-1}
$$
and therefore $\eta_{2r}=\floor*{\frac{2m}{\la}}$. Moreover, from Remark \ref{remark_imp}, $\eta_{\la_0-1-i}=m-1-\eta_{r+i}=\Big\lfloor\frac{(\la-1)m}{\la}\Big\rfloor$ for $0\leq i \leq r-1$ and hence $\eta_{\la_0-r-1}=\Big\lfloor\frac{(\la-2)m}{\la}\Big\rfloor$.  

For $\be=2$ in (\ref{inequality1}) we have that 
$$
\floor*{\frac{2m}{\la}}= \eta_{2r}\leq \eta_{2r+i}\leq \floor*{\frac{2m}{\tau}}\, \text{ for }\,0\leq i\leq r-1,
$$
and for $\be=\tau-2$ and $i=r-1$ in (\ref{inequality1}),
$$
m-\ceil*{\frac{2m}{\la}}=\floor*{\frac{(\la-2)m}{\la}}=\eta_{\la_0-r-1}=\eta_{r(\tau-2)+r-1}\leq \floor*{\frac{(\tau-2)m}{\tau}}= m-\ceil*{\frac{2m}{\tau}}. 
$$
Similarly to the previous case, we deduce that $\floor*{\frac{2m}{\la}}=\floor*{\frac{2m}{\tau}}$, $\eta_{2r+i}=\floor*{\frac{2m}{\la}}$ and $\eta_{\la_0-r-1-i}=\Big\lfloor\frac{(\la-2)m}{\la}\Big\rfloor$ for  $0\leq i \leq r-1$. This implies that $\eta_{3r}=\floor*{\frac{3m}{\la}}$ and $\eta_{\la_0-2r-1}=\Big\lfloor\frac{(\la-3)m}{\la}\Big\rfloor$. 

By continuing this process, we obtain that
$$
\eta_{r\be+i}=\floor*{\frac{\be m}{\la}}\text{ for } 1\leq \be\leq \tau-1 \text{ and }0\leq i\leq r-1.
$$
In particular, for $\be=\tau-1$ and $i=r-1$ we have that 
$$
\floor*{\frac{(\tau-1)m}{\la}}=\eta_{r(\tau-1)+r-1}=\eta_{r\tau -1}=\eta_{\la_0-1}=\floor*{\frac{(\la-1)m}{\la}}.
$$
This implies that $\tau=\la$. 

$ii) \Rightarrow i):$ Suppose that $\la_1=\la_2=\dots=\la_r$. Then $\la_0=r\la_r$ and $\eta_{\be r +i }=\Big\lfloor\frac{\be m}{\la_r}\Big\rfloor$ for $1\leq \be\leq \la_r-1$ and $0\leq i\leq r-1$. On the other hand, from Theorem \ref{theorem1},
\begin{align*}
H(Q_{\infty})&=\left\langle m, r\la_r, r\left( \be m -\la_r \floor*{\frac{\be m}{\la_r}}\right): \be =1, \dots, \la_r-1\right\rangle\\
&=\left\langle m, r\la_r, r\la_r\left\{\frac{\be m}{\la_r}\right\}: \be =1, \dots, \la_r-1\right\rangle.
\end{align*}
Since $(m, \la_r)=1$, there exists $\be'\in \{1, \dots, \la_r-1\}$ such that $\left\{\frac{\be' m}{\la_r}\right\}=\frac{1}{\la_r}$ and therefore $H(Q_{\infty})=\langle m, r \rangle$. 

Now, suppose that $r< m$.

$i) \Rightarrow iii):$ It is clear.

$iii) \Rightarrow i):$ We are going to prove that $(m, r)=1$. We start by noting two important facts. First, note that 
\begin{equation}\label{sym_eq0}
(\epsilon_k+\la_0)\equiv 0 \mod m\quad\text{if and only if}\quad 0\leq k \leq r-1.
\end{equation}
Second, since $r<m$ and $(m, \la_j)=1$ for each $j$, then $H(Q_{\infty})$ is symmetric if and only if $m_{H(Q_{\infty})}=r$. In fact, for this case we have that $g(\cX)=(m-1)(r-1)/2$. Furthermore, from item $iii)$ of Proposition \ref{prop_3}, $m_{H(Q_\infty)}=\min\{m, m(r-1)-F_{H(Q_\infty)}\}$. If $H(Q_\infty)$ is symmetric, then $F_{H(Q_\infty)}=2g(\mathcal{X})-1=m(r-1)-r$ and 
$$
m_{H(Q_\infty)}=\min\{m, m(r-1)-F_{H(Q_\infty)}\}=\min\{m, r\}=r.
$$
Conversely, if $m_{H(Q_\infty)}=r$ then $m(r-1)-F_{H(Q_\infty)}=r$ and therefore $F_{H(Q_\infty)}=2g(\mathcal{X})-1$. This implies that $H(Q_\infty)$ is symmetric.  

Let $\sigma$ be the permutation of the set $\{0, \dots, \la_0-1\}$ such that
$$
\Ap (H(Q_{\infty}), \la_0)=\{0=\epsilon_{\sigma(0)}+\la_0< \epsilon_{\sigma(1)}+\la_0<\dots <\epsilon_{\sigma(\la_0-1)}+\la_0  \}.
$$
Since  $(m, \la_j)=1$ for $j=1, \dots, r$ and $H(Q_{\infty})$ is symmetric, then $F_{H(Q_{\infty})}=\epsilon_{\sigma(\la_0-1)}=m(r-1)-r$. Thus, from Proposition \ref{prop3_pre}, we have that
\begin{equation}\label{sym_eq1}
\epsilon_{\sigma(i)}+\epsilon_{\sigma(\la_0-1-i)}=m(r-1)-\la_0-r \quad \text{for }i=0, \dots, \la_0-1.
\end{equation}
On the other hand, from Proposition \ref{lemma_2}, we know that
\begin{equation}\label{sym_eq2}
\epsilon_{\sigma(i)}+\epsilon_{r+\la_0-1-\sigma(i)}=m(r-1)-\la_0 \quad \text{for }i=0, \dots, \la_0-1.
\end{equation}
Let $\la>0$ and $0\leq r'<r$ be integers such that $\la_0=\la r +r'$, and $i_1\in \{0, \dots, \la_0-1\}$ be such that $\sigma(\la_0-1-i_1)=r-1$. Then, from (\ref{sym_eq1}), 
$$
\epsilon_{\sigma(i_1)}=m(r-1)-\la_0-r-\epsilon_{\sigma(\la_0-1-i_1)}=m(r-1)-\la_0-r-\epsilon_{r-1}=-r.
$$
If $(\epsilon_{\sigma(i_1)}+\la_0)\equiv 0 \mod{m}$, then $m$ divides $\la_0-r$ and therefore $\la_0=ms+r$ for some integer $s$. Since $(m, \la_0)=1$, we conclude that $1=(m, \la_0)=(m, ms+r)=(m, r)$. Otherwise, from (\ref{sym_eq0}), $\sigma(i_1)\geq r$ and therefore there exists $i_2\in \{0, \dots, \la_0-1\}$ such that $\sigma(\la_0-1-i_2)=r+\la_0-1-\sigma(i_1)$. From (\ref{sym_eq1}) and (\ref{sym_eq2}), we have that
$$
\epsilon_{\sigma(i_2)}=m(r-1)-\la_0-r-\epsilon_{\sigma(\la_0-1-i_2)}=m(r-1)-\la_0-r-\epsilon_{r+\la_0-1-\sigma(i_1)}=\epsilon_{\sigma(i_1)}-r=-2r.
$$
If $(\epsilon_{\sigma(i_2)}+\la_0)\equiv 0 \mod{m}$, then $m$ divides $\la_0-2r$ and therefore $(m, r)=1$. Otherwise, $\sigma(i_2)\geq r$ and therefore there exists $i_3\in \{0, \dots, \la_0-1\}$ such that $\sigma(\la_0-1-i_3)=r+\la_0-1-\sigma(i_2)$ and 
$$
\epsilon_{\sigma(i_3)}=m(r-1)-\la_0-r-\epsilon_{\sigma(\la_0-1-i_3)}=m(r-1)-\la_0-r-\epsilon_{r+\la_0-1-\sigma(i_2)}=\epsilon_{\sigma(i_2)}-r=-3r.
$$
By continuing this process, we have that $(m, r)=1$ or we obtain a sequence $i_1, \dots, i_{\la}$ such that 
$$
\sigma(i_j)\geq r \quad \text{and} \quad \epsilon_{\sigma(i_j)}=-jr \quad \text{for }1\leq j\leq  \la.
$$ 
If the latter happens, then $0<\epsilon_{\sigma(i_{\la})}+\la_0=\la_{0}-\la r=r'<r$, a contradiction because $m_{H(Q_{\infty})}=r$. Therefore, $(m, r)=1$.  Finally,  since $\langle m, r\rangle\subseteq H(Q_{\infty})$ and $g(\cX)=(m-1)(r-1)/2$, we conclude that $H(Q_{\infty})=\langle m, r\rangle$.
\end{proof}

\section{Maximal Castle curves}
In this section, as an application of the results obtained, we characterize certain classes of $\fqs$-maximal Castle curves of type $(\cX, Q_{\infty})$ (that is, $\fqs$-maximal curves $\cX$ such that $\#\cX(\fqs)=q^2m_{H(Q_{\infty})}+1$ and $H(Q_{\infty})$ is symmetric), where $\cX$ is the curve defined by the equation $y^m=f(x)$, $f(x)\in \fqs[x]$ and $(m, \deg f)=1$, and $Q_{\infty}$ is the only place at infinity of the curve $\cX$. Some examples of $\fqs$-maximal Castle curves of this type are presented below:
\begin{itemize}
\item The Hermitian curve 
$$ y^{q+1}=x^q+x.$$
\item The curve over $\fqs$ defined by the affine equation 
$$
y^{q+1}=a^{-1}(x^{q/p}+x^{q/p^2}+\cdots +x^p+x),$$
where $p=\char (\fq)$ and $a\in \fqs$ is such that $a^{q}+a=0$ and $a\neq 0$.
\end{itemize}
Note that, in all cases, the places corresponding to the roots of the polynomial $f(x)$ are totally ramified in the extension $\fqs(x, y)/\fqs(x)$, the multiplicities of the roots of $f(x)$ are equal and $m=q+1$. We will show that, under certain conditions, all $\fqs$-maximal Castle curves of type $(\cX, Q_{\infty})$ have these characteristics.
\begin{lemma}
Let $\cX$ be the algebraic curve given in Theorem \ref{theorem1} and let $Q_{\infty}$ be its only place at infinity. Suppose that $\cX$ is defined over $\fqs$, $(m, \la_i)=1$ for $i=1, \dots, r$, $(\cX, Q_{\infty})$ is a Castle curve, and $r<m$. Then
$$
\cX \text{ is }\fqs\text{-maximal if and only if }m=q+1.
$$
\end{lemma}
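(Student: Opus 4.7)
The plan is a direct computation comparing the Castle condition with the Hasse-Weil bound, with the arithmetic simplifying dramatically under the given hypotheses. I will exploit Theorem~\ref{teo_sym} in a crucial way to pin down both the multiplicity of the semigroup and the genus.

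First I would invoke the hypotheses $(m,\la_j)=1$ for each $j$ and $r<m$. Since $(\cX,Q_\infty)$ is a Castle curve, the semigroup $H(Q_\infty)$ is symmetric by definition, so Theorem~\ref{teo_sym} applies and gives simultaneously that $\la_1=\la_2=\cdots=\la_r$ and $H(Q_\infty)=\langle m,r\rangle$. Because $r<m$, the multiplicity is then $m_{H(Q_\infty)}=r$, and the Castle-curve hypothesis (equality in the Lewittes bound over $\fqs$) becomes
\begin{equation*}
\#\cX(\fqs)=q^{2}r+1.
\end{equation*}
At the same time, since $\la_1=\cdots=\la_r$ and $(m,\la_i)=1$, the genus formula \eqref{genusX} collapses to $g(\cX)=\frac{(m-1)(r-1)}{2}$, so the Hasse-Weil upper bound reads
\begin{equation*}
\#\cX(\fqs)\leq q^{2}+1+2g(\cX)q=q^{2}+1+(m-1)(r-1)q.
\end{equation*}

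For the forward direction, assume $\cX$ is $\fqs$-maximal. Equating the two expressions above yields
\begin{equation*}
q^{2}r+1=q^{2}+1+(m-1)(r-1)q,\qquad\text{i.e.}\qquad q^{2}(r-1)=q(m-1)(r-1).
\end{equation*}
Since $r\geq 2$ gives $r-1\geq 1$, dividing by $q(r-1)$ produces $q=m-1$, that is, $m=q+1$. For the converse, substituting $m=q+1$ into the Hasse-Weil upper bound gives exactly $q^{2}+1+q\cdot q(r-1)=q^{2}r+1$, which coincides with the Castle count $\#\cX(\fqs)=q^{2}r+1$; hence equality holds in the Hasse-Weil bound and $\cX$ is $\fqs$-maximal.

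There is no real obstacle here beyond correctly assembling the ingredients: the whole argument is a one-line algebraic identity once Theorem~\ref{teo_sym} is used to force $\la_1=\cdots=\la_r$ (which kills the $\sum(m,\la_i)$ contribution in \eqref{genusX}) and to force $m_{H(Q_\infty)}=r$. The mild subtlety worth stating explicitly is that cancellation of $r-1$ is legitimate because the hypothesis $r\geq 2$ is part of the standing assumption on the curve $\cX$ in \eqref{curveXX}.
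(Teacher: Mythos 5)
Your proof is correct and follows essentially the same route as the paper: reduce the Castle condition to $\#\cX(\fqs)=q^2r+1$, compare with the Hasse--Weil count $q^2+1+q(m-1)(r-1)$, and cancel $r-1\geq 1$. The only (harmless) difference is that you obtain $m_{H(Q_\infty)}=r$ by invoking Theorem~\ref{teo_sym} to get $H(Q_\infty)=\langle m,r\rangle$, whereas the paper deduces it more directly from item $iii)$ of Proposition~\ref{prop_3} together with $F_{H(Q_\infty)}=2g(\cX)-1$; note also that the simplification $g(\cX)=(m-1)(r-1)/2$ already follows from $(m,\la_i)=1$ alone, so the conclusion $\la_1=\cdots=\la_r$ is not actually needed anywhere in your argument.
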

\begin{proof}
From the assumptions, we obtain that $g(\cX)=(m-1)(r-1)/2$. Since $(\cX, Q_{\infty})$ is a Castle curve, $H(Q_{\infty})$ is symmetric and therefore $F_{H(Q_{\infty})}=2g(\cX)-1=mr-m-r$. Moreover, from $iii)$ of Proposition \ref{prop_3}, $m_{H(Q_{\infty})}=\min\{m, r\}=r$. Therefore, $\cX$ is $\fqs$-maximal if and only if 
\begin{equation*}\label{eq_Castle}
\#\cX(\fqs)=q^2r+1=q^2+1+q(m-1)(r-1).
\end{equation*}
Thus, the result follows.
\end{proof}
\begin{lemma}
Let $\cX$ be the algebraic curve given in Theorem \ref{theorem1} and let $Q_{\infty}$ be its only place at infinity. Suppose that $\cX$ is defined over $\fqs$, $m=q+1$, $r<q+1$, $(q+1, \la_i)=1$ for $i=1, \dots, r$, and $\cX$ is $\fqs$-maximal. The following statements are equivalent:
\begin{enumerate}[i)]
\item $H(Q_{\infty})$ is symmetric.
\item $\# \cX(\fqs)=q^2m_{H(Q_{\infty})}+1$.
\item $\la_1=\dots= \la_r$. 
\end{enumerate}
\end{lemma}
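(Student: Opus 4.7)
The plan is to reduce every implication to two already-established tools: Theorem \ref{teo_sym} (which gives $i) \Leftrightarrow iii)$ on the nose) and Proposition \ref{prop_3}$(iii)$ (which links the multiplicity to the Frobenius number when $(m,\la_j)=1$). The only arithmetic I need from the maximality hypothesis is the exact count of rational points, which together with the genus formula will pin down $m_{H(Q_{\infty})}$.

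First I would record the basic numerical data that follows from the standing hypotheses. Since $(m,\la_i)=1$ for every $i$, formula (\ref{genusX}) collapses to
$$
g(\cX)=\frac{(m-1)(r-1)+r-r}{2}=\frac{q(r-1)}{2},
$$
so the Hasse--Weil equality for an $\fqs$-maximal curve yields
$$
\#\cX(\fqs)=q^2+1+2g(\cX)q=q^2+1+q^2(r-1)=q^2r+1.
$$
This single identity is the workhorse for the two nontrivial directions.

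The equivalence $i)\Leftrightarrow iii)$ is then immediate from Theorem \ref{teo_sym}: the hypotheses $(m,\la_j)=1$ and $r<m$ are precisely those required to apply it, and the theorem states that $H(Q_{\infty})$ is symmetric if and only if $\la_1=\cdots=\la_r$ (in which case $H(Q_{\infty})=\langle m,r\rangle$). For $iii)\Rightarrow ii)$ I assume $\la_1=\cdots=\la_r$, invoke Theorem \ref{teo_sym} to conclude $H(Q_{\infty})=\langle m,r\rangle$, and read off $m_{H(Q_{\infty})}=r$ using $r<m$; the equation $\#\cX(\fqs)=q^2r+1=q^2 m_{H(Q_{\infty})}+1$ from the numerical setup then finishes the implication.

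The remaining implication $ii)\Rightarrow i)$ is where the only subtlety lies, and is what I expect to be the main obstacle of presentation (though not of content). Assuming $\#\cX(\fqs)=q^2m_{H(Q_{\infty})}+1$, comparing with $\#\cX(\fqs)=q^2r+1$ forces $m_{H(Q_{\infty})}=r$. Because $r<m$ and $(m,\la_j)=1$ for every $j$, Proposition \ref{prop_3}$(iii)$ applies, giving
$$
r=m_{H(Q_{\infty})}=\min\{m,\,m(r-1)-F_{H(Q_{\infty})}\}=m(r-1)-F_{H(Q_{\infty})},
$$
so $F_{H(Q_{\infty})}=m(r-1)-r=qr-q-1=2g(\cX)-1$, proving that $H(Q_{\infty})$ is symmetric. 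Combined with $i)\Leftrightarrow iii)$ this closes the chain of equivalences, and the only thing to be careful about is that the application of Proposition \ref{prop_3}$(iii)$ genuinely uses the coprimality assumption on each $\la_j$ (which is in the hypothesis list).
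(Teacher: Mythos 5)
Your proposal is correct and follows essentially the same route as the paper: both compute $g(\cX)=q(r-1)/2$ and $\#\cX(\fqs)=q^2r+1$ from maximality, obtain $i)\Leftrightarrow iii)$ from Theorem \ref{teo_sym}, and link symmetry to $m_{H(Q_{\infty})}=r$ via Proposition \ref{prop_3}$(iii)$. The only difference is cosmetic — the paper proves $i)\Leftrightarrow ii)$ directly through the intermediate statement $m_{H(Q_{\infty})}=r$, whereas you close the cycle $iii)\Rightarrow ii)\Rightarrow i)$ — but the tools and the arithmetic are identical.
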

\begin{proof}
Note that from the hypotheses we have that $g(\mathcal{X})=q(r-1)/2$ and therefore $\#\mathcal{X}(\mathbb{F}_{q^2})=q^2+1+2g(\mathcal{X})q=q^2r+1$.

$i)\Leftrightarrow ii):$ It is enough to note that
\begin{align*}
H(Q_{\infty}) \text{ is symmetric } &\Leftrightarrow F_{H(Q_{\infty})}=qr-q-1\\
&\Leftrightarrow m_{H(Q_{\infty})}=r & \text{(from Proposition \ref{prop_3})}\\
&\Leftrightarrow \#\cX(\fqs)=q^2m_{H(Q_{\infty})}+1.
\end{align*}

$i)\Leftrightarrow iii):$ This follows directly from Theorem \ref{teo_sym}.
\end{proof}
We summarize these results in the following theorem.
\begin{theorem}\label{theorema3}
Let $\cX$ be the algebraic curve defined in Theorem \ref{theorem1} and let $Q_{\infty}$ be its only place at infinity. Suppose that $\cX$ is defined over $\fqs$, $(m, \la_i)=1$ for $i=1, \dots, r$, and $r<m$. Then the following statements are equivalent:
\begin{enumerate}[i)]
\item $(\cX, Q_{\infty})$ is a $\fqs$-maximal Castle curve.
\item $(\cX, Q_{\infty})$ is a Castle curve and $m=q+1$.
\item $\cX$ is $\fqs$-maximal, $H(Q_{\infty})$ is symmetric, and $m=q+1$. 
\item $\cX$ is $\fqs$-maximal, $\#\cX(\fqs)=q^2m_{H(Q_{\infty})}+1$, and $m=q+1$. 
\item $\cX$ is $\fqs$-maximal, $\la_1=\dots=\la_r$, and $m=q+1$. 
\end{enumerate}
\end{theorem}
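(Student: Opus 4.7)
The plan is to prove the five statements equivalent by closing the pentagon i) $\Rightarrow$ ii) $\Rightarrow$ iii) $\Rightarrow$ iv) $\Rightarrow$ v) $\Rightarrow$ i), invoking the two lemmas preceding the theorem together with Theorem \ref{teo_sym} at the last link. Throughout, the standing hypotheses $(m,\la_i)=1$ for every $i$, $r<m$, and $\cX$ defined over $\fqs$ are in force, which means each lemma becomes applicable as soon as its extra hypothesis (``Castle'' for the first, ``$\fqs$-maximal with $m=q+1$'' for the second) appears at the current step of the cycle.

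For i) $\Leftrightarrow$ ii) I would argue directly from the first lemma: by definition i) asserts both the Castle property and $\fqs$-maximality, so the lemma yields $m=q+1$; conversely, Castle plus $m=q+1$ forces $\fqs$-maximality via the same lemma. For iii) $\Leftrightarrow$ iv) $\Leftrightarrow$ v) I would simply apply the second lemma, since all three statements share the hypotheses $\fqs$-maximal and $m=q+1$, and the three remaining conditions are precisely symmetry, Lewittes equality, and constancy of the $\la_i$ in its statement.

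The bridge ii) $\Leftrightarrow$ iii) is handled separately. From ii), the Castle property gives symmetry of $H(Q_\infty)$ and the already-proved ii) $\Rightarrow$ i) supplies $\fqs$-maximality, so iii) holds. Conversely, iii) furnishes $\fqs$-maximality together with symmetry, so the second lemma converts symmetry into Lewittes equality, which is the Castle property, giving ii). For the final link v) $\Rightarrow$ i), constancy of the $\la_i$ together with $(m,\la_i)=1$ produces via Theorem \ref{teo_sym} the identity $H(Q_\infty)=\langle m,r\rangle$, which is symmetric, so iii) holds and the pentagon closes back to i) through the links already established.

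The main obstacle is purely bookkeeping: before each invocation of a lemma one must verify that its auxiliary hypothesis (Castle or $\fqs$-maximality) is already in hand, which is why I choose the cyclic order above rather than a star pattern through one distinguished statement. With this ordering, $\fqs$-maximality and $m=q+1$ are available at every step after i) $\Rightarrow$ ii), so each remaining implication reduces to a direct citation and no separate Hasse--Weil calculation is required beyond what the lemmas already encapsulate.
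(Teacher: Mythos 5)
Your proposal is correct and takes essentially the same route as the paper: the paper gives no separate proof of Theorem \ref{theorema3} beyond the remark that it ``summarizes'' the two preceding lemmas, and your cycle is exactly the intended combination of Lemma 5.1 (Castle $\Rightarrow$ [maximal $\Leftrightarrow m=q+1$]) for i)$\Leftrightarrow$ii) with Lemma 5.2 for the equivalences among iii), iv), v) and the bridge to ii). The hypothesis bookkeeping you describe is the only content of the argument, and you handle it correctly.
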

Finally, we note that for the case when $\la_i$ divides $m$ for each $i=1, \dots, r$, the Weierstrass semigroup $H(Q_{\infty})$ is symmetric, see Theorem \ref{teo_2}. Therefore, by assuming  that $\cX$ is $\fqs$-maximal, we conclude that 
$$
(\cX, Q_{\infty})\text{ is } \fqs\text{-maximal Castle curve if and only if }\#\cX(\fqs)=q^2m_{H(Q_{\infty})}+1.
$$

\section{Acknowledgment}
I would like to thank Professors Luciane Quoos and Rohit Gupta, as well as the anonymous referees for their valuable comments and suggestions that improved the presentation of this paper.

\bibliographystyle{abbrv}
\bibliography{bibinfinityplace} 
\end{document}